\documentclass{tMAM2e}
\usepackage{epstopdf}% To incorporate .eps illustrations using PDFLaTeX, etc.
\usepackage[bookmarksnumbered,colorlinks,bookmarks,citecolor=blue,linkcolor=blue,urlcolor=blue,breaklinks,linktocpage]{hyperref}% To create hyperlinks using PDFLaTeX, etc.
\usepackage[all]{hypcap}% This turns on the hyperlinks for \label{} and \ref{}, and others.
\usepackage{mathtools}
\usepackage{thm-restate}
\usepackage{tikz,pgfplots}
\pgfplotsset{compat=1.16}
\usetikzlibrary {arrows.meta, positioning} 

\newtheorem{thm}{Theorem}[section]

\newtheorem{prop}[thm]{Proposition}

\theoremstyle{definition}

\newtheorem{defn}[thm]{Definition}
\newtheorem{examp}[thm]{Example}

\theoremstyle{remark}
\newtheorem{rem}[thm]{Remark}

\newcommand{\tB}{\text{B}}
\newcommand{\G}{\mathcal{G}}
\renewcommand{\S}{\mathcal{S}}
\newcommand{\V}{\mathcal{V}}
\newcommand{\FF}{\mathbb{F}}
\newcommand{\QQ}{\mathbb{Q}}
\newcommand{\ZZ}{\mathbb{Z}}
\newcommand{\1}{\mathbf{1}}
\newcommand{\up}{\mathord{\uparrow}}
\newcommand{\down}{\mathord{\downarrow}}

\newcommand{\cross}{\times}
\newcommand{\size}[1]{\lvert #1 \rvert}

\newcommand{\uu}{\textsuperscript}
\newcommand{\Fib}{\mathrm{Fib}}
\renewcommand{\epsilon}{\varepsilon}

% For footnotes in the table
\makeatletter
\newcounter{flet}[table]
\newcommand{\ftn}[1]{%
  \stepcounter{flet}%
  \phantomsection%
  \def\@currentlabel{\emph{\alph{flet}}}{\label{#1}}%
  \textsuperscript{\emph{\alph{flet}}}
}
\makeatother

\hyphenation{Pa-le-stri-na Prae--ne--sti--ni Agnus}

\begin{document}

% The following two lines are for production department only. Do not change.
%\jvol{00} \jnum{00} \jyear{2015}
%\doi{insert doi link}

% Replace the title below by your title. Only the first letter and proper names are capitalized.
\title{Renaissance canons with asymmetric schemes}

% Write out full first names, do not use initial. Middle initial or middle name(s) may be included if wished.
% Include the comma before the "and" if there are 3 authors.
% In the affiliations, use the format: Department, University, City, Country. No other information is included, such as street.
% In the affiliations, as already done below use a semicolon and \\ to separate successive affiliations, no semicolon on the last affiliation.
% Please use the exact formatting as in the example below, only replace your information. If there are two authors, their names should be separated by ``and''
% The submission requires an anonymous version and a non-anonymous version. To make the anonymous PDF, simply
% comment out author and affiliation information using %
\author{
\name{Evan M. O'Dorney%
  \textsuperscript{a}$^{\ast}$\thanks{$^\ast$Corresponding author. Email: eodorney@andrew.cmu.edu}
}
\affil{\textsuperscript{a}Department of Mathematics, Carnegie Mellon University, Pittsburgh, Pennsylvania, USA}
%leave blank for production:
\received{}
}

\maketitle

% If you have an Online Supplement, mention that in the last line of the abstract, and say what its contents are. Capitalize both words: Online Supplement.
\begin{abstract}
By a \emph{scheme} of a musical canon, we mean the time and pitch displacement of each entering voice. When the time displacements are unequal, achieving consonant sonorities is especially challenging. Using a first-species theoretical model, we quantify the flexibility of schemes that Renaissance composers used or could have used. We craft an algorithm to compute this flexibility value precisely (finding in the process that it is an algebraic integer). We find that Palestrina consistently selected some of the most flexible schemes, more so than his predecessors, but that he by no means exhausted all feasible schemes. To add support to the model, we present two new compositions within the limits of the style utilizing unexplored canonic schemes.

In the Online Supplement, we provide MIDI realizations of the musical examples and Sage code used in the numerical computations.
\end{abstract}

% Please provide five to ten keywords taken from terms used in your manuscript in the ``keywords'' environment below.
% Separate keywords by semicolons followed by a space, no semicolon after the last word. No ``and'' preceding the final word.
% Italicize all foreign words, here and throughout article. German nouns are also capitalized. Delete the examples below.
\begin{keywords}
Renaissance; canon; Palestrina; flexibility; Markov chain
\end{keywords}

% If the article has any substantial mathematics or computers at all, please include several American Mathematical Society (AMS) Mathematics Subject Classification (MCS) or the Association for Computing Machinery (ACM) Computing Classification Scheme (CCS) words. Please use the newest versions, currently 2010 and 2012 respectively.
% The AMS MCS codes can be found at: http://www.ams.org/msc/msc2010.html
% The ACM CCS words can be found at: http://www.acm.org/about/class/class/2012
% Separate by semicolons followed by a space, no semicolon after the last one. Delete the example numbers below.
% 00A65 Mathematics and music
\begin{classcode}\textit{2010 Mathematics Subject Classification}:
  00A65; %   	Mathematics and music
  68R10; %   	Graph theory (including graph drawing) in computer science
  05C38; %   	Paths and cycles
  90B15; %   	Stochastic network models in operations research
  11R06  %    PV numbers and generalizations
\end{classcode}

% Begin main matter
\section{Introduction}
A \emph{canon} is a musical piece in which two or more voices perform the same melodic line, but displaced in time. In the simplest canons, the voices enter at equally spaced times and at the same pitch, forming a \emph{catch} or \emph{round} (such as the medieval ``Sumer is icumen in'' or the more modern ``Fr\`ere Jacques''). In others, the pitches of each voice are related by transposition, inversion, or the like. The use of canon presents an extra layer of challenge to the composer, above and beyond the demands of a musical style. This challenge has the form of a mathematically precise constraint deriving one voice from another. Canons (in various styles) have formed the subject matter of numerous articles in the mathematical study of music, which have even spawned mathematical problems studied in their own right (see \cite{Geenen,tilings}).

 % In some cases, one is left with the impression that devising the canonic scheme demanded more skill than writing the canon itself.

Medieval and early Renaissance composers produced a menagerie of canons (see \cite{Feininger}) including such devices as inversion, retrograde, omission of some indicated notes, augmentation, diminution, and simultaneous meters (the last found, for instance, in Ockeghem's \emph{Missa prolationum}).  By the time of Giovanni Pierluigi da Palestrina (c.~1520--1594), composers were losing interest in the more recondite kinds of canons, and there appeared other specimens which, while superficially of simple type, demand great compositional skill. Such are canons where \emph{three} or more voices sing from a single line, with pitch transposition. Palestrina's output includes a number of these, which are listed in Table \ref{tab:Pal_canons}, grouped by their canonic schemes. By a \emph{scheme} we mean the rule by which the following voice(s) of the canon are derived from the leading voice; the notation for the schemes is explained in the next section. We are here especially interested in the time displacements between the entries of successive voices; we call \emph{asymmetric} a canon in three or more parts where these time displacements are unequal.

We have found Palestrina a convenient composer to highlight here because his style has been intensively studied as representing the height of the Renaissance, and because he takes pains not to relax the rules of the style, which pertain especially to dissonance treatment, even in the face of additional demands imposed by a canonic structure \citep[p.~199] {jeppesen2012style}. In the course of the sixteenth century from Josquin to Palestrina, the rules for treating dissonance grew increasingly strict, so that writing a canon became more challenging. The plethora of written knowledge about the Palestrina style have made it an attractive aim for computerized music composition since the 1980's, and the quest for algorithms better imitating this style remains tantalizing today \citep{Lewin,Farbood,Herremans}.

Very little is known about how Renaissance canons were written.  \citet{GauldinComposition} suggests that a skeleton in first species (whole notes only) could have been written first, based on the observation that some canons in the repertoire can be analyzed as a single underlying motive whose repetition is obscured by varied ornamentation of the melodic line. The question then arises of which schemes admit a canon on some motive, or even better, on several motives that can be interlocked in varying ways. In this paper, we address this question as follows:
\begin{itemize}
  \item Using a theoretical model based on first-species reduction, we give each canonic scheme $\S$ a \emph{flexibility} value $\lambda(\S)$, a real number ranging from $0$ to $5$, such that the higher the flexibility value, the more possibilities open up to the composer using the scheme (Sections \ref{sec:types}--\ref{sec:theor}). Intuitively, the flexibility value computes the number of possibilities for each successive note of the canon, ignoring octaves and chromatic alterations.
  \item We exhaustively compute the flexibility values of canonic schemes with imitative displacements up to $8$ beats (semibreves) (Sections \ref{sec:properties}--\ref{sec:computing}). We verify that some schemes that are symmetric (i.e.~have equal time intervals) have a flexibility value much higher than average, offering a new explanation for their ubiquity in the repertoire.
  \item Some schemes have flexibility values that are exact integers, such as $1$, $2$ and $3$; these canons can be composed using the techniques of invertible counterpoint already known in the Renaissance. In general, the flexibility is an algebraic integer (the root of a polynomial with integer coefficients and leading coefficient $1$), often an irrational number of high degree, pointing to a more intricate composition process.
  \item With asymmetric canons, we find that Palestrina picks out the more flexible schemes (those with $\lambda \gtrsim 2.7$) more consistently than his predecessors, suggesting he had a better knack for devising schemes that would support interesting melodies without relaxing the dissonance treatment.
  \item However, our model suggests new schemes that should be just as flexible as those in the repertoire. To vindicate the model, we present two original compositions on new schemes in accord with the rules of counterpoint practiced in the late 16th century (Section \ref{sec:new}).
\end{itemize}

% The remainder of the paper is organized as follows. In Section \ref{sec:types}, we classify canonic schemes. In Section \ref{sec:theor}, we define the flexibility of a scheme. In Section \ref{sec:properties}, we show that the flexibility is invariant under a large family of transformations, allowing us to greatly shorten the list of essentially different schemes to be considered. (Some technical mathematical lemmas needed in our results are proved in Appendix \ref{sec:technical}.) In Section \ref{sec:data}, we compare flexibility values for various schemes present and absent in the literature (the full data set is tabulated in Appendix \ref{app:tab}). In Section \ref{sec:computing}, we explain the algorithm used to compute the flexibility value, demonstrating in the process that it is an algebraic integer (a root of an integer polynomial with leading coefficient $1$). Finally, in Section \ref{sec:new}, we present two new compositions in the Palestrina style utilizing unexplored schemes.

\section{Types of canons}
\label{sec:types}

\begin{table}
  \begin{tabular}{lp{2.2in}p{2.2in}}
    \multicolumn{3}{l}{\textbf{I. Stacked Canons}} \\
    Page & Work & Scheme \\ \hline
    XI, 67 & Mass \emph{Ad fugam,} Benedictus & $\{(0,0), (4,-4), (8,-8)\tB\}$ \\
    XI, 152 & Mass \emph{Papae Marcelli,} Agnus Dei II &
    $\{(0,0),(6,4),(12,8)\}$ \\
    XVII, 129ff. & Mass \emph{Sacerdotes Domini} & $\{(0,0), (3,1), (6,2)\}$ (Kyrie I, Credo, Sanctus) \\
    && $\{(0,0), (4,1), (8,2)\}$ (Christe, Kyrie II, Agnus Dei I--II, Gloria, Et unam sanctam) \\
    && $\{(0,0)\tB, (4,1), (8,2)\}$ (Pleni)\\
    && $\{(0,0), (6,-1), (12,-2)\}$ (Hosanna; time units are semibreves in $3/1$ time). \\
    %    \\
    \multicolumn{3}{l}{\textbf{II. Inverting Canons} (both at the 12th)} \\
    Page & Work & Scheme \\ \hline
    I, 158 & Motet ``Virgo prudentissima,'' secunda pars & $\{(0,0), (6,4), (12,-3)\}$ \\
    XIX, 34 & Mass \emph{Gi\`a f\`u chi m'hebbe cara,} Benedictus & $\{(0,0),(4,-4)\tB,(8,3)\}$ \\
    \\
    \multicolumn{3}{l}{\textbf{III. Asymmetric Canons}} \\
    Page & Work & Scheme \\ \hline
    I, 152 & Motet ``Virgo prudentissima,'' prima pars & $\{(0,0), (4,4), (10,-3)\}$ \\
    III, 123 & Motet ``Accepit Jesus calicem'' & $\{(0,0), (1,3), (5,7)\}$ \\
    XI, 53 & Mass \emph{Sine Nomine,} Agnus Dei II & $\{(0,0),(4,4),(14,6),(18,10)\}$ \\
    XI, 65 & Mass \emph{Ad fugam,} Pleni &
    $\{(0,0)\tB,(1,3),(3,7)\}$ \\
    XI, 69 & Mass \emph{Ad fugam,} Agnus Dei II &
    $\{(0,0),(6,0),(7,3)\}$ (plus a $2$-voice canon) \\
    XII, 47 & Mass \emph{Primi toni,} Agnus Dei II &
    $\{(0,0),(7,-7),(8,-4)\}$ \\
    XII, 132 & Mass \emph{Repleatur os meum,} Agnus Dei II &
    $\{(0,0),(8,7),(12,3)\}$
  \end{tabular}
  \caption{Canons of 3-in-1 or 4-in-1 type in the works of Palestrina. Volume and page numbers in the table refer to the Breitkopf \& H\"artel edition of the complete works of Palestrina, available online on IMSLP \citep{Pal}.}
  \label{tab:Pal_canons}
\end{table}

In Table \ref{tab:Pal_canons}, we have listed all canons in the works of Palestrina where at least three parts read from the same melodic line.  It will be noted that the great majority of these occur in masses, specifically in the movements Pleni, Benedictus, and Agnus Dei II. In the Pleni and Benedictus, which are middle movements within the Sanctus, it was fairly common to reduce the texture to three voices, raising the possibility of a three-voice unaccompanied canon. The Agnus Dei II, by contrast, is the grand finale of the mass setting, frequently involving added voices and an intensification of the compositional procedures in use. The three Palestrina masses using canons as a structural motif throughout (\emph{Ad fugam,} \emph{Repleatur os meum,} and \emph{Sacerdotes Domini}) all conclude with three-voice canons combined with additional voices.

Each canon has a \emph{scheme} which we notate as a set of ordered pairs $(t_i, p_i)$, meaning that a voice begins $t_i$ beats after the written timing and $p_i$ diatonic steps higher than the written pitch. In this style, the beat (\emph{tactus}) is usually a semibreve. The leading voice (\emph{dux}) sings the music as written and thus gets the ordered pair $(0,0)$. The other voices (\emph{comites}) enter later and thus have positive values of $t_i$. A negative pitch displacement $p_i$ indicates that a particular voice sings \emph{below} the written pitch. The annotation ``$\tB$'' following an ordered pair indicates that this voice is the bass of the texture. This is significant when it happens, because in Renaissance style, certain intervals (perfect and augmented fourths, diminished fifths) are treated as consonances between upper voices but dissonances when they involve the bass \citep[p.~175]{jeppesen1992counterpoint}.

As indicated in the table, the canons are of several types. The first and simplest is termed \emph{stacked canon} in \citet{GosmanStacked} and is used by many composers in the Renaissance and after. It occurs when each new voice has the same displacement in both time and pitch from the one before it; that is, the sequences of $t_i$ and of $p_i$ are both arithmetic progressions. As a special case, if all $p_i = 0$, then we have successive voices entering at the same pitch, forming a \emph{catch} as mentioned in the introduction. Palestrina did not publish any catches, but they seem to form the vast majority of canonic writing in all historical periods: some compilations include hundreds of catches, e.g.~\cite{Aldrich}. In a catch, the intervals occurring between the first two voices reappear between the second and third, and so on through the voices; in a general stacked canon, the same holds, possibly with chromatic alterations, as shown by the brackets in Figure \ref{fig:sacerdotes_pleni}.
\begin{figure}
  \includegraphics[width=\linewidth]{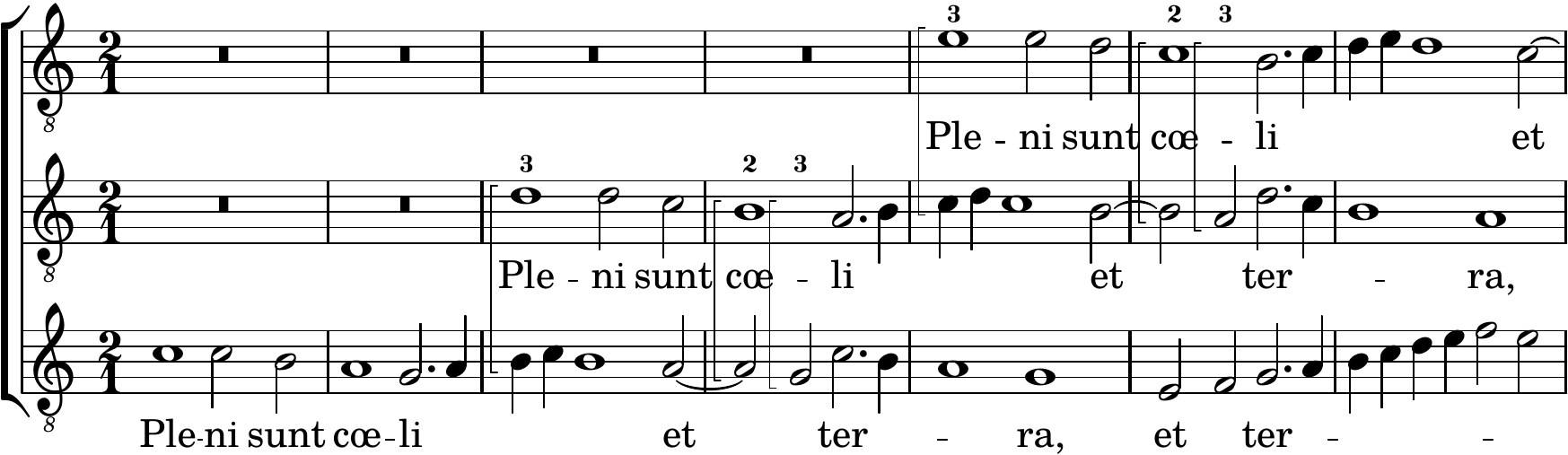}
  \caption{A stacked canon from Palestrina's mass \emph{Sacerdotes Domini}}
  \label{fig:sacerdotes_pleni}
\end{figure}

More artistically, a composer can employ equal time intervals but alter the pitch displacements so that the intervals between the first and second voices are inverted when they recur between the second and third voices. We call this disposition an \emph{inverting canon} because it requires the first two voices to be in invertible (double) counterpoint. As any counterpoint textbook will mention, invertible counterpoint at the octave, 10th, or 12th is feasible. The scheme employed by Palestrina in Figure \ref{fig:GiaFuBenedictus} causes the intervals between the two lower parts to reappear between the outer parts, but inverted at the 12th (that is, numerically subtracted from 13), as shown by the brackets. We further term this example a \emph{$\vee$-canon,} the pictorial symbol $\vee$ meaning that the bottom voice enters second; the other possibility is a $\wedge$-canon, in which the top voice enters second. We will revisit the distinction between inverting canons of $\vee$ and $\wedge$ type in Section \ref{sec:data}.

\begin{figure} 
  \includegraphics[width=\linewidth]{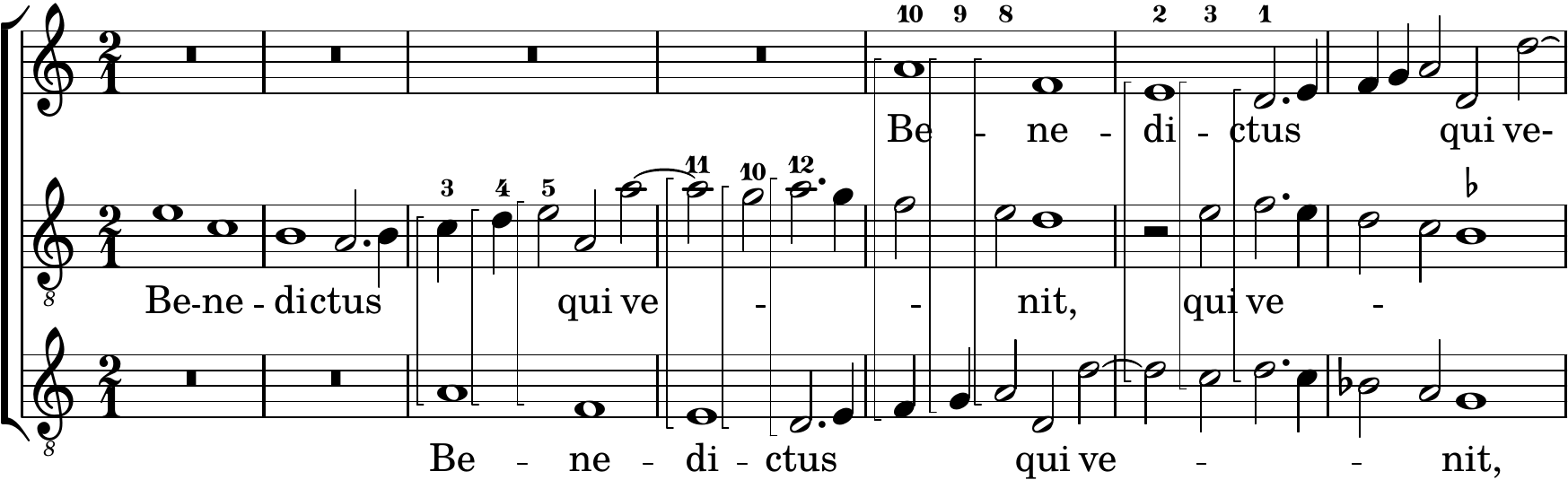}
  \caption{An inverting $\vee$-canon at the 12th from Palestrina's mass \emph{Gi\`a f\`u chi m'hebbe cara}}
  \label{fig:GiaFuBenedictus}
\end{figure}

Are equal time intervals necessary for a feasible canon? After a brief introduction to stacked and inverting canons, Gauldin's popular textbook continues: ``Opening points of canonic imitation employing \emph{both} asymmetrical intervallic and temporal relations create extreme problems as regards the continuation of the strict imitation'' \citep*[p.~115]{gauldin2013practical}. Nevertheless, numerous examples exist in the Renaissance repertoire, and Palestrina wrote no less than seven canons of this type. Most have three voices plus some non-canonic accompanying voices, but the final Agnus Dei of the mass \emph{Sine nomine} has a four-voice canon, each voice entering on a different pitch class, plus three accompanying voices for a 7-part texture in all. The canonic melody of this remarkable piece is constructed from a pentachord (D-E-F-G-A) and is imitated on the other three pentachords in the diatonic scale that have no melodic tritone (A-B-C-D-E, C-D-E-F-G, G-A-B-C-D). The time and pitch relation between the last two voices is the same as that between the first two. In his other asymmetric canons, the imitations are at perfect intervals (fourths, fifths, and octaves), as is typical of Renaissance polyphony more generally.

Most of these canons are accompanied by free voices, which ease the compositional process by filling in the texture during the frequent rests in the canonic parts; also, the free bass\footnote{Accompanied canons where the bass is one of the canonic parts are rare; Palestrina writes them only in the case of $2$-voice canon, and we will not consider them here.} corrects any fourths between the canonic parts and includes strong leaps of fourths and fifths at cadences, which could be awkward to imitate through each voice of the texture. However, the Pleni from the mass \emph{Ad fugam,} perhaps Palestrina's most impressive canon, is for three parts unaccompanied (see Figure \ref{fig:ad_fugam_pleni}). Except possibly for the final cadence, which introduces the so-called ``consonant 4th'' idiom by an atypical octave leap \citep[see][p.~236]{jeppesen2012style}, its polyphonic style features the same meticulous dissonance treatment practiced in Palestrina's free imitative works. Its scheme $\{(0,0)\tB,(1,3),(3,7)\}$ is the exact inversion of the scheme $\{(0,0),(1,-3),(3,-7)\tB\}$ used in one accepted realization of the famous anonymous English puzzle canon ``Non nobis Domine,'' once attributed to William Byrd (see Figure \ref{fig:non_nobis}).
\begin{figure}
  \includegraphics[width=\linewidth]{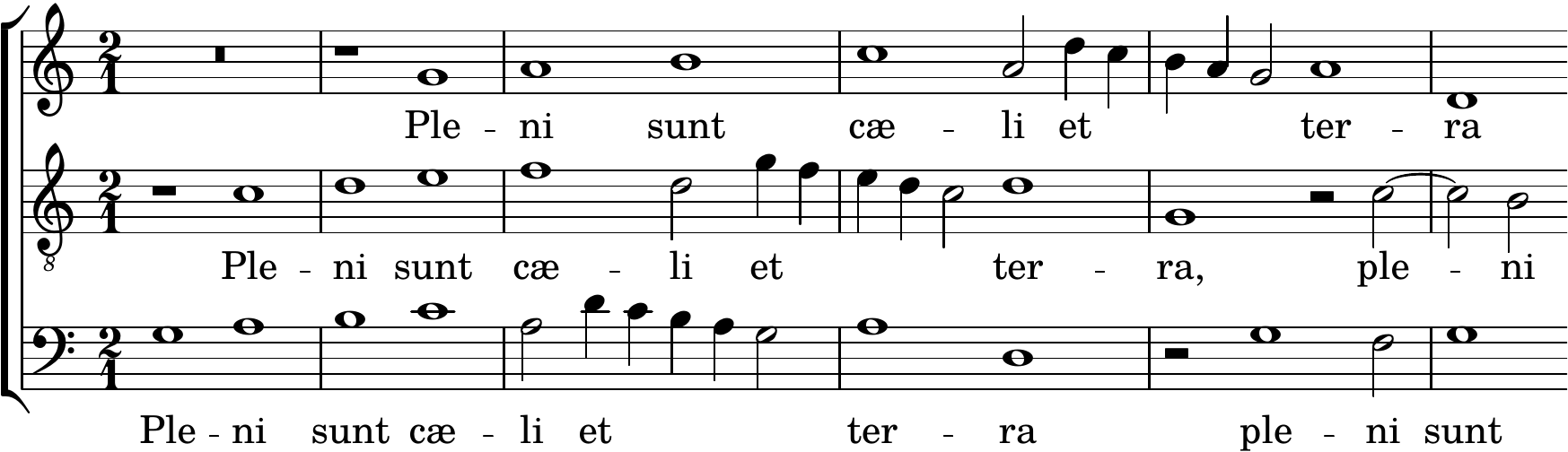}
  \caption{A canon from Palestrina's mass \emph{Ad fugam} with the asymmetric scheme $\{(0,0)\tB, (1,3), (3,7)\}$}
  \label{fig:ad_fugam_pleni}
\end{figure}
\begin{figure}
  \includegraphics[width=\linewidth]{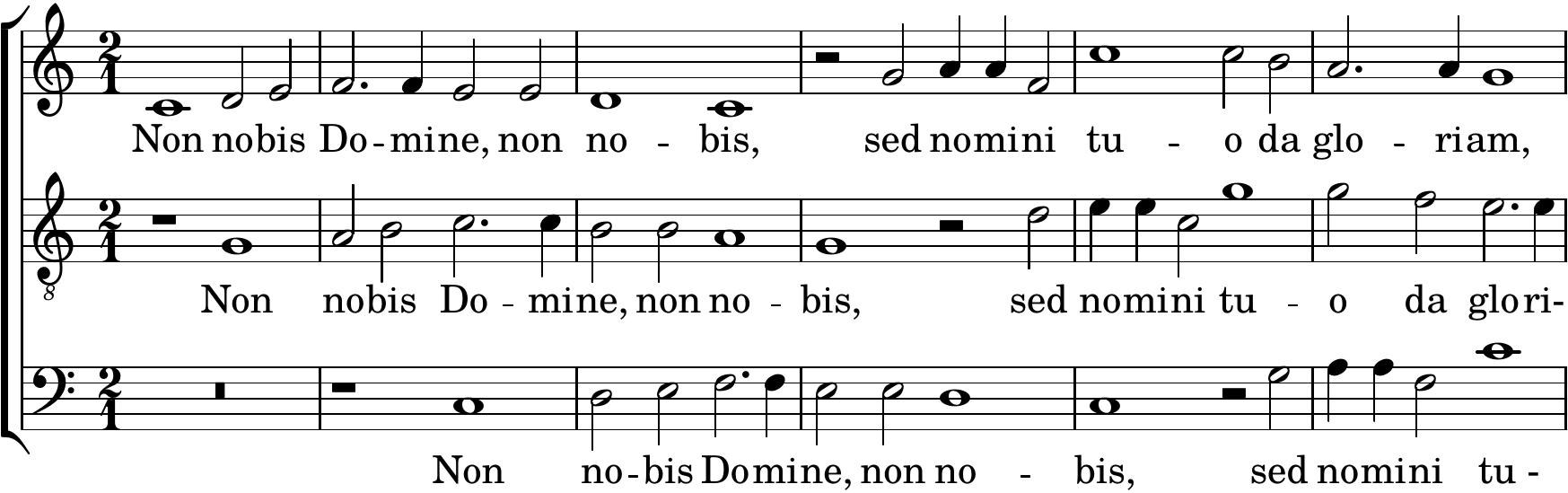}
  \caption{Realization of an anonymous English canon with the asymmetric scheme $\{(0,0),(1,-3),(3,-7)\tB\}$}
  \label{fig:non_nobis}
\end{figure}

\section{Defining the flexibility of a canonic scheme}
\label{sec:theor}
\subsection{A simplified model of the style}
To understand how Palestrina and his contemporaries chose the time and pitch displacements for their canons, we follow \citet{GauldinComposition} and other authors in making a first-species reduction of the canon; that is, we assume a melody of whole notes only. We simplify further by ignoring all octave differences and chromatic alterations, so that our tones belong to the finite field $\FF_7 = \ZZ/7\ZZ$, which consists of seven elements $\{0,1,2,3,4,5,6\} = \{B,C,D,E,F,G,A\}$. We thus depart from recent authors \citep{Geenen, Arias}, who use the ring $\ZZ/12\ZZ$, representing the chromatic scale, to analyze Renaissance-style counterpoint. The $7$-note diatonic scale is a better fit with the transposition relations that relate voices in a canon in this style (as is especially evident in Figure \ref{fig:sacerdotes_pleni}) and also simplifies our computations.

\begin{defn}
  A \emph{canonic scheme} $\S$ with $r$ \emph{voices} is a set of ordered pairs $\{(t_i, p_i)\}_{1 \leq i \leq r}$ with time displacements $t_i \in \ZZ$, each pitch displacement $p_i \in \FF_7$, with no two $t_i$ equal, and with one of the ordered pairs optionally bearing the annotation $\tB$ for ``bass.'' A \emph{melody} is a sequence $(x_i)_{i=1}^n = [x_1, x_2, \ldots, x_n]$ of \emph{notes} $x_i \in \FF_7$. Given a melody $(x_i)$, its \emph{realization} under the scheme $\S$ is the matrix of tones $[y_{ti}]$, where $y_{ti}$, the note sung by the $i$th voice at time $t$, is given by
  \begin{equation}
    y_{ti} = x_{t - t_i} + p_i.
  \end{equation}
\end{defn}

We then model the rules of dissonance avoidance in the style as follows:
\begin{defn}
  A melody is a \emph{valid canon} for a canonic scheme $\S$ if its realization obeys the following two rules for all voices $i$ and $j$, at all times $t \in \ZZ$ such that $y_{ti}, y_{tj}$ both exist (that is, such that $1 \leq t - t_i \leq n$ and $1 \leq t - t_j \leq n$):
  \begin{enumerate}[$($a$)$]
    \item\label{it:2nd} Harmonic seconds and sevenths are not allowed:
    \[
    y_{ti} - y_{tj} \notin \{1, 6\}.
    \]
    \item\label{it:4th} Fourths above the bass are not allowed:
    \[
    (t_j, p_j)\tB \implies y_{ti} - y_{tj} \neq 3.
    \]
  \end{enumerate}
  Let $\V_n(\S)$ denote the set of valid $n$-note canons for the scheme $\S$, and let $V_n(\S) = \size{\V_n(\S)}$, be the number of valid $n$-note canons. We abbreviate $V_n(\S)$ to $V_n$ when the choice of scheme $\S$ is clear.
\end{defn}

\begin{examp} \label{ex:2voice}
	Let $\S = \{(0,0), (t, p)\tB\}$ be a two-voice canonic scheme, $t > 0$. We have
	\[
	V_n(\S) = \begin{cases}
		7^n, & n \leq t \\
		7^t 4^{n-t}, & n > t
	\end{cases}
	\]
	because the first $t$ notes are completely free, and thereafter we may write the third, fifth, sixth, or octave above the already determined bass note.
\end{examp}

Note the vast simplification of the resources of Renaissance composition in this model. There are no rules forbidding tritones (harmonic or melodic), which are often avoided by \emph{musica ficta} chromatic alterations, which were typically supplied by the performer and need not match among the different voices of a canon. There is no provision for the use of rests; at any rate, a canonic scheme which forced the use of frequent rests could not be judged a good one. There is no imposition of the rule against parallel octaves and fifths, which can often be evaded by contrary motion or ornamenting the melodic line.
There is no provision for suspensions, which can and do complicate matters by allowing dissonance to occur on a strong beat. For example, Figure \ref{fig:versatile}
\begin{figure}[htbp]
	\centering
	\includegraphics[scale=1.0]{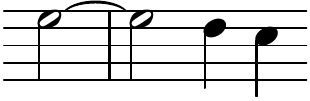}
	\caption{A versatile motif for canons}
	\label{fig:versatile}
\end{figure}
is a useful motif in asymmetric canons because any of its three notes may dissonate according to the laws of the style: the first as a suspension, the second as an accented passing note, and the third as a passing note, neighbor note, or cambiata.

\subsection{An example with Fibonacci behavior}
\label{sec:fib}

Consider the scheme $\S_\Fib = \{(0,0),(1,-8)\tB,(3,0)\}$, marked \ref{ti:fib} in Table \ref{tab:flex}. In words, this is a canon for three voices, with one upper voice entering first, the bass entering after one beat a ninth lower, and the remaining upper voice entering after two more beats at the same pitch as the first voice. A little experimentation will show that the valid canons for this scheme are constrained as follows:
\begin{itemize}
  \item At each note, the melody must move either up a step ($\up 1$), down a step ($\down 1$), up a fourth, that is, three steps ($\up 3$), or down a fourth ($\down 3$).
  \item Moreover, these motions cannot occur in arbitrary order, but an $\up 3$ can be followed by an $\up 1$ or another $\up 3$, while an $\up 1$ \emph{must} be followed by an $\up 3$, and so on, following the arrows in Figure \ref{fig:fib_graph}.
\end{itemize}
\begin{figure}[htbp]
  \centering
  \begin{tikzpicture}[
    scale = 1,
    every node/.style = {draw, circle, minimum size = 10mm}
    ]
    \node[] (A) {$\up3$};
    \node[right=15mm of A] (B) {$\up1$};
    \node[right=20mm of B] (C) {$\down3$};
    \node[right=15mm of C] (D) {$\down1$};
    \draw[->] (A) to[bend left] (B);
    \draw[->] (B) to[bend left] (A);
    \draw[->] (C) to[bend left] (D);
    \draw[->] (D) to[bend left] (C);
    \draw[->] (A) to[out=-150,in=150,loop] ();
    \draw[->] (C) to[out=-150,in=150,loop] ();
  \end{tikzpicture}
  \caption{A graph of the valid melodic motions for the canonic scheme $\S_\Fib = \{(0,0),(1,-8)\tB,(3,0)\}$ of Section \ref{sec:fib}. (Intervals are zero-based, so that, for instance, $\up3$ means that the melody rises by a fourth.)}
  \label{fig:fib_graph}
\end{figure}
\begin{figure}
  \centering
  \includegraphics[width=0.48\linewidth]{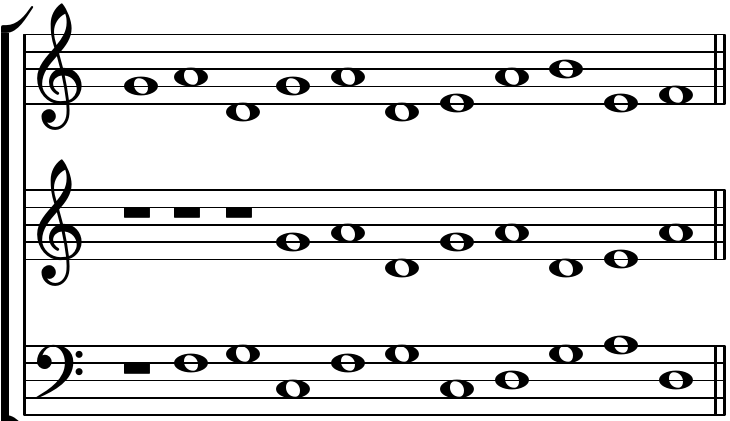}
  \hfill
  \includegraphics[width=0.48\linewidth]{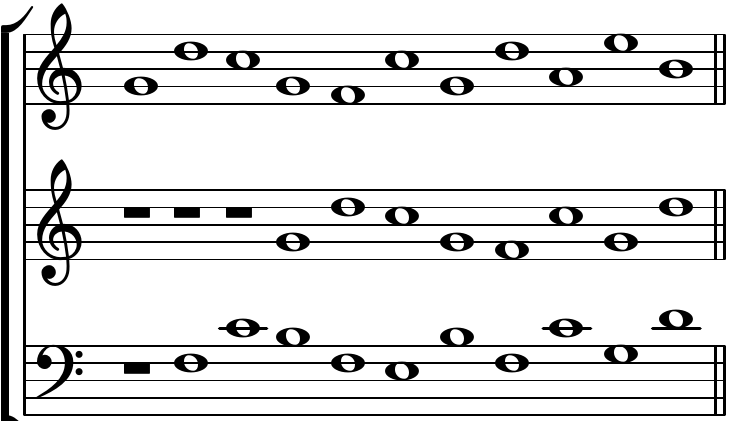}
  \caption{Two whole-note canons with the scheme $\S_\Fib$, which are valid according to the simplified model.}
  \label{fig:fib_exs}
\end{figure}
Two examples of valid canons for this scheme are shown in Figure \ref{fig:fib_exs}. Since the graph in Figure \ref{fig:fib_graph} is disconnected, the canons are of two types, one using only the moves $\up3$ and $\up1$, the other using only the moves $\down3$ and $\down1$.

To compute $V_n(\S_\Fib)$, we multiply the number of paths with $n-1$ nodes in the graph by $7$ (for the possible starting notes) to obtain the data shown in Table \ref{tab:fib}. Observe that, excluding the first term $V_1$, the results are $14$ times the Fibonacci numbers $1,1,2,3,5,\ldots,$ and are governed by the recurrence relation
\begin{table}
  \centering
\begin{tabular}{c|c|c}
  $n$ & $V_n$ & $V_n/14$ \\ \hline
  $1$ & $7$ & \\
  $2$ & $28$ & $2$ \\
  $3$ & $42$ & $3$ \\
  $4$ & $70$ & $5$ \\
  $5$ & $112$ & $8$
\end{tabular}
\caption{The count of valid canons for $\S_\Fib$. The Fibonacci sequence appears.}
\label{tab:fib}
\end{table}
\begin{equation} \label{eq:fib_rec}
  V_n = V_{n-1} + V_{n-2}.
\end{equation}
Appealing to the well-known formula for Fibonacci numbers, we have
\begin{equation} \label{eq:fib_exp}
  V_n = \frac{14}{\sqrt{5}} \big(\phi^{n-1} - \bar\phi^{n-1}\big)
\end{equation}
where $\phi = (1 + \sqrt{5})/2 = 1.618\dots$ is the golden ratio and $\bar\phi = (1 - \sqrt{5})/2 = -0.618\dots$ is its conjugate. Since the powers of $\bar\phi$ tend quickly to $0$, we can write the asymptotic equality
\begin{equation} \label{eq:fib_asymp}
  V_n \sim c \cdot \phi^n
\end{equation}
where $c = 14/(\phi\sqrt{5})$ is a constant. Informally, \eqref{eq:fib_asymp} states that under the scheme $\S_\Fib$, there are ``$\phi$ possibilities for each note''; in practice, some notes are forced, while others have two possibilities. In the next subsection, we make sense of the ``number of possibilities for each note'' for a general canonic scheme.
 
\subsection{The flexibility}
We will prove that, for every scheme $\S$, one can tell a similar story. There is a directed graph $\G_\S$, analogous to Figure \ref{fig:fib_graph}, with a finite (possibly huge) number of nodes, such that a valid canon with the scheme $\S$ corresponds to a walk along the arrows of $\G_\S$; and the number $V_n(\S)$ obeys a linear recurrence, like \eqref{eq:fib_rec}, with integer coefficients; and there is an explicit formula, like \eqref{eq:fib_exp}, which expresses $V_n(\S)$ as a finite combination of exponentials. We focus on the leading exponential, which provides a way to formalize the ``number of possibilities for each note'' under a given scheme.

\begin{defn}
  Let $\S$ be a canonic scheme. Define the \emph{flexibility} $\lambda(\S)$ to be a parameter measuring how fast $V_n(\S)$ grows as $n$ increases:
  \[
  \lambda(\S) = \lim_{n \to \infty} \sqrt[n]{V_n(\S)},
  \]
  assuming that the limit exists; we will later prove that it always does (see Theorem \ref{thm:alg_int}).
\end{defn}

Some examples of this definition follow:
\begin{itemize}
  \item The scheme $\S = \{(0,0)\}$ prescribes a ``canon'' with only one voice. As the melody can proceed unrestricted among the seven scale tones, we have $V_n(\S) = 7^n$, so the flexibility is $\lambda(\S) = 7$. Observe that the flexibility of any scheme $\S$ must lie in the range
  \[
    0 \leq \lambda(\S) \leq 7.
  \]
  \item As computed in Example \ref{ex:2voice}, a two-voice scheme $\S$ at $t$ beats with a marked bass voice has $V_n(\S) = 7^t 4^{n-t}$, which grows like $4^n$ for large $n$, so its flexibility is $\lambda(\S) = 4$. If there is no marked bass voice, an analogous computation shows that $\lambda(\S) = 5$. As more voices are added, the flexibility evidently cannot increase, so our $\lambda$-values for canons of two or more voices will lie in the range
  \[
  0 \leq \lambda(\S) \leq 5.
  \]
  \item The three-voice scheme $\S_\Fib$ studied in Section \ref{sec:fib} has flexibility $\phi = 1.618\ldots.$
\end{itemize}

\section{Properties of the flexibility value}
\label{sec:properties}

The payoff for such a stark simplification of the laws of the style is that the flexibility of a canonic scheme is invariant under a large set of transformations, vastly reducing the possibilities we need to consider:

\begin{restatable}{prop}{propeqv}\label{prop:eqv}
  Let $\S$ be a canonic scheme, and let $\S'$ be a new scheme derived from it by one of the following transformations:
  \begin{enumerate}[$($a$)$]
    \item\label{it:ttrans} Time translation $t_i \mapsto t_i + a$, for a constant $a \in \ZZ$
    \item\label{it:ptrans} Pitch transposition $p_i \mapsto p_i + b$, for a constant $b \in \FF_7$
    \item\label{it:shear} ``Shearing'' $p_i \mapsto p_i + ct_i$, for a constant $c \in \FF_7$
    \item\label{it:inv} Inversion: keeping the time intervals $t_i' = t_i$, set
    \[
    p_i' = \begin{cases*}
      5 - p_i & if $(t_i, p_i)$ is a marked bass voice \\
      -p_i & otherwise.
    \end{cases*}
    \]
    \item\label{it:tdil} Time dilation (augmentation, diminution, or retrograde) $t_i \mapsto c t_i$, for a constant $c \in \QQ^\cross$ for which all $ct_i \in \ZZ$.
  \end{enumerate}
  Then $\S'$ and $\S$ have the same flexibility value:
  \[
  \lambda(\S') = \lambda(\S).
  \]
\end{restatable}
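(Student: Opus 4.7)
The plan is to handle the five transformations separately: for (a)--(d), I would exhibit an explicit bijection $\V_n(\S) \to \V_n(\S')$ for every $n$, giving the stronger statement $V_n(\S) = V_n(\S')$; for (e), no exact bijection exists, and I would instead factor $V_n(\S')$ as a product of values $V_m(\S)$ and take $n$-th roots. Cases (a) and (b) are essentially formal: the realization $y_{ti} = x_{t - t_i} + p_i$ and the dissonance rules involve only the differences $t - t_i$ and $y_{ti} - y_{tj}$, so time translation corresponds to re-indexing the melody and pitch transposition leaves every interval unchanged. For (c) shearing, the bijection is $(x_s) \mapsto (x_s + cs)$ (addition in $\FF_7$); substituting gives $y'_{ti} = y_{ti} + ct$, so every harmonic interval $y'_{ti} - y'_{tj} = y_{ti} - y_{tj}$ is preserved.

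For (d) inversion, I would use the melody bijection $(x_s) \mapsto (-x_s)$. Intervals between two non-bass voices are negated, and the forbidden set $\{1, 6\}$ is invariant under negation modulo $7$. Intervals involving the bass voice $j$ become $y'_{ti} - y'_{tj} = -(y_{ti} - y_{tj}) - 5 \equiv 2 - (y_{ti} - y_{tj}) \pmod{7}$, and one checks directly that the affine involution $x \mapsto 2 - x$ permutes the allowed set $\{0, 2, 4, 5\}$ onto itself. This computation also explains why the asymmetric shift $p_j \mapsto 5 - p_j$ (rather than $-p_j$) is needed for the bass voice: it is precisely the correction that restores symmetry of the bass rule under negation.

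For (e) time dilation, I would first address positive integer dilation $c = k$: the validity rules for $\S' = \{(kt_i, p_i)\}$ at a fixed time $t$ involve only those $x_s$ with $s \equiv t \pmod{k}$, so the melody decomposes into $k$ interleaved subsequences $x^{(r)} = (x_r, x_{r+k}, x_{r+2k}, \ldots)$, each of which must independently form a valid canon under $\S$. Hence $V_n(\S') = \prod_{r=1}^{k} V_{n_r}(\S)$ with $n_r \approx n/k$, and taking $n$-th roots yields $\lambda(\S') = \lambda(\S)$. Retrograde ($c = -1$) is a direct bijection (reverse the melody); and a general rational $c = p/q$ with $ct_i \in \ZZ$ factors as a compression by $q$ (the inverse of a stretch, hence flexibility-preserving by the above), a stretch by $|p|$, and possibly a retrograde.

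The main obstacle is (e), which is genuinely asymptotic rather than exact and relies on the existence of the limit defining $\lambda(\S)$ that is proved separately in Theorem~\ref{thm:alg_int}; one must also verify that the discrepancies among the $n_r$, which differ from $n/k$ by at most $1$, do not perturb the $n$-th root in the limit (including the degenerate case $\lambda(\S) = 0$, where $V_m(\S) = 0$ eventually forces $V_n(\S') = 0$). Case (d) demands the next-most care, since naive sign inversion would corrupt the bass-voice rule, and the precise constant $5$ is essential for realigning the forbidden intervals.
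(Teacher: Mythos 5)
Your proposal is correct and follows essentially the same route as the paper: explicit melody bijections for (a)--(d) (identity, identity, $x_s \mapsto x_s + cs$, $x_s \mapsto -x_s$, with your check that the bass intervals transform by $d \mapsto 2-d$, i.e.\ inversion at the 10th, making explicit what the paper states briefly), and for (e) the same interleaving decomposition $V_n(\S') = V_q(\S)^{c-\ell}V_{q+1}(\S)^{\ell}$ followed by the root/limit argument, retrograde for $c=-1$, and factoring a rational dilation through an integer stretch and its inverse. The only differences are presentational, including your explicit flagging of the degenerate $\lambda(\S)=0$ case, which the paper also treats.
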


Of these operations, only ``shearing'' is a novelty in musical analysis. If the constant $c$ is $1$, then the shearing operation moves the first note up one step, the second note up two steps, and so on (see Figure \ref{fig:shear_mel}).
\begin{figure}
  \centering
  \includegraphics[width=\linewidth]{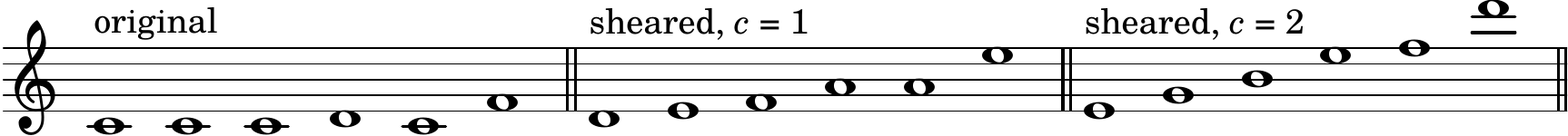}
  \caption{A ``shearing'' operation on melodies. A melody (left) is restated with the $i$th note moved up $i$ steps (center) or $2i$ steps (right).}
  \label{fig:shear_mel}
\end{figure}
The vertical intervals $y_{ti} - y_{tj}$ are thereby preserved, although the new canon may have quite a different musical flavor from the original (see Figure \ref{fig:shear_canon}).
\begin{figure}
  \centering
  \includegraphics[width=0.48\linewidth]{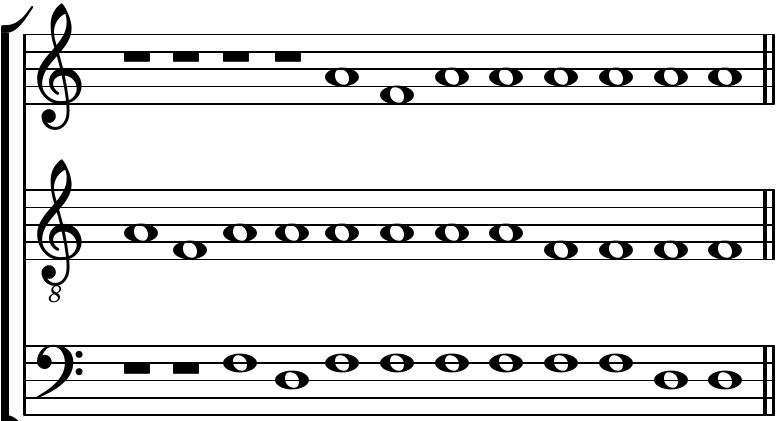}
  \hfill
  \includegraphics[width=0.48\linewidth]{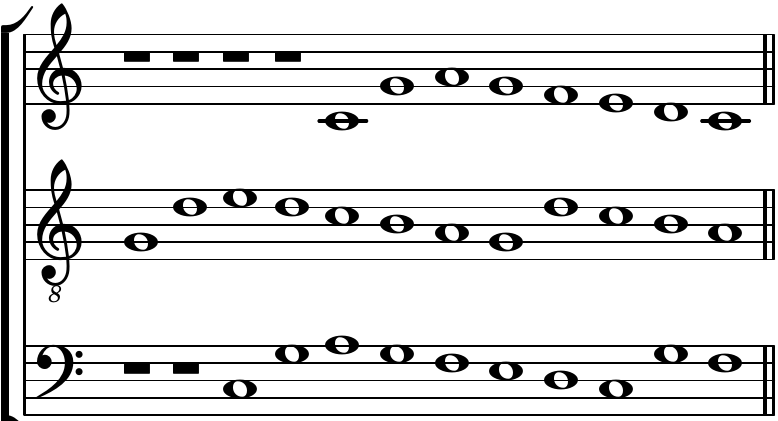}
  \caption{Canons with the schemes $\{(0,0), (2,-2)\tB, (4,7)\}$ (left) and $\{(0,0), (2,-4)\tB, (4,3)\}$ (right), which both invert at the 12th. The two canons are related by a shear transformation with $c = -1$ that transposes the chord on the $i$th beat by $i$ steps downward, with some octave adjustments. The shear operation transforms a melody of just two notes (left) into the familiar folk melody of ``Twinkle, Twinkle, Little Star'' (right).}
  \label{fig:shear_canon}
\end{figure}

A consequence of the time dilation invariance, item \ref{it:tdil}, is that the time unit used to measure the displacements $t_i$ is immaterial. In triple meter, for example, we can measure $t_i$ in beats or in bars without affecting the computed flexibility value. (This is not true under more subtle notions of validity, for instance if parallel perfect intervals are disallowed.)

The proof of this proposition is somewhat lengthy and will be found in Section \ref{sec:propeqv}.

\section{Flexibility data for three-voice schemes}
\label{sec:data}
We call two canonic schemes \emph{equivalent} if they can be transformed to one another by some combination of the transformations in Proposition \ref{prop:eqv}. In particular, we know that equivalent canonic schemes have the same flexibility.

In Figure \ref{fig:flex}, we graph the flexibility values of all three-voice canons $\{(t_1,p_1), (t_2,p_2), (t_3,p_3)\}$ with time displacements $t_i$ up to $8$, which conveniently is enough to include all three-voice canons in Table \ref{tab:Pal_canons} after suitably rescaling the time unit. The same flexibility values are shown numerically in Table \ref{tab:flex} in Appendix \ref{app:tab}. Note that we can make great reductions in the number of cases using Proposition \ref{prop:eqv}. First, we can use time and pitch translation to assume that $t_1 = p_1 = 0$ (we always assume $t_1 < t_2 < t_3$). Then, we can use time dilation to assume that $t_2$ and $t_3$ are coprime positive integers. We can also use time dilation by $-1$ to assume that $t_3 \geq 2 t_2$ (that is, the last two voices are at least as far apart in time as the first two). Finally, we can use shearing to set $p_2 = 0$, unless $t_2$ is a multiple of $7$, in which case we can set $p_3 = 0$ (this last case does not occur in the range of values under consideration). Thus we have only three free variables $t_2, t_3, p_3$, together with the option to designate one of the three voices as bass. In Figure \ref{fig:flex} and Table \ref{tab:flex}, we have sorted the possible schemes first by $t_3$, then by $t_2$ (which runs over the positive integers up to $t_2/2$ that are coprime to $t_3$). More information about how these values were computed will be found in Section \ref{sec:computing}.

In Table \ref{tab:rep_flex}, we list the flexibility values for the Palestrina canons in Table $1$, plus some canons by other Renaissance composers for comparison. % (The values $3$ and $2$ are exact; the remaining flexibility values are irrational, as we shall see.)

\begin{figure}
  \begin{tikzpicture}[>=latex, shorten >= 2pt]
    \tikzset{
      mynode/.style={rectangle,rounded corners,draw=black, top color=white, bottom color=yellow!50,very thick, inner sep=1em, minimum size=3em, text centered},
      myarrow/.style={->, >=latex', shorten >=20pt},
      mylabel/.style={text width=7em, text centered} 
    }  
    \begin{axis}[
      width=\textwidth,
      xlabel={Time displacements $(t_2,t_3)$},
      ylabel={Flexibility $\lambda(\mathcal{S})$},
      xtick=\empty,
      extra x ticks={%
        0, 2, 4, 6,7, 9, 11,12,13, 15,16
      },
      extra x tick style={
        tick label style={
          rotate=90,anchor=east}},
      extra x tick labels={%
        ${(1,2)}$,
        ${(1,3)}$,
        ${(1,4)}$,
        ${(1,5)}$,${(2,5)}$,
        ${(1,6)}$,
        ${(1,7)}$,${(2,7)}$,${(3,7)}$,
        ${(1,8)}$,${(3,8)}$%
      },
      legend entries={%
        no bass,%
        {no bass, found in repertoire},
        with bass,%
        {with bass, found in repertoire},
      }
      ]
      \addplot[
      scatter,
      only marks,
      point meta=explicit symbolic,
      scatter/classes={
        n={mark=diamond,mark size=3,blue},
        nhl={mark=diamond*,mark size=3,blue},
        b={mark=triangle,mark size=3,red},
        bhl={mark=triangle*,mark size=3,red}
      },
      ] table [meta=label,x=x,y=flex] {flex.dat};
      %
      % Labels
      \node[] (stacked1) at (axis cs: 0, 3.935) {};
      \node[] (stacked2) at (axis cs: 0, 3.140) {};
      \node[above right=0in and 0.22in of stacked1] (stacked-label) {stacked canons};
      \draw[->] (stacked-label.west) -- (stacked1);
      \draw[->] (stacked-label.west) -- (stacked2);
      
      \node[] (Forestier) at (axis cs: 0, 3.562) {};
      \node[above right=0.1in and 0.2in of Forestier] (Forestier-label) {%
        $\vee$-canons at the 10th
      };
      \draw[->] (Forestier-label.west) -- (Forestier);
      
      \node[] (impos) at (axis cs: 0, 1) {};
      \node[right=of impos, node distance=1in] (impos-label) {inflexible canon (see Figure \ref{fig:impos})};
      \draw[->] (impos-label) -- (impos);
      
      \node[] (AFP) at (axis cs: 2, 2.992) {};
      \node[above right=0.58in and 0.2in of AFP] (AFP-label) {\emph{Ad fugam,} Pleni};
      \draw[->] (AFP-label.south west) -- (AFP.east);
      
      \node[] (Non) at (axis cs: 2, 2.781) {};
      \node[below right=0.7in and 0.2in of Non] (Non-label) {``Non nobis Domine''};
      \draw[->] (Non-label.north west) -- (Non.east);
      
      \node[] (New) at (axis cs: 13, 2.814) {};
      \node[below right=0.7in and 0.2in of New,anchor=north] (New-label) {New canon (see Section \ref{sec:new_sc})};
      \draw[->] (New-label.north) -- (New.east);
      
      \node[] (Fib) at (axis cs: 2, 1.618) {};
      \node[below right=0.05in and 0.5in of Fib,anchor=west] (Fib-label) {New canon (see Section \ref{sec:new_fib})};
      \draw[->] (Fib-label.west) -- (Fib.east);
      
      \node[] (inv) at (axis cs: 0, 3) {};
      \node[below right=1.3in and 0.3in of inv,anchor=north west] (inv-label) {$\vee$-canons at the 12th};
      \draw[->,black,double] (inv-label.north west) -- (inv);
      
      \node[] (Herc) at (axis cs: 0, 2) {};
      \node[below right=0.8in and 0.3in of Herc,anchor=north west] (Herc-label) {$\wedge$-canons at the 12th};
      \draw[->] (Herc-label.north west) -- (Herc);
      
    \end{axis}
  \end{tikzpicture}
  \caption{Flexibility values for various three-voice canonic schemes. Along the $x$-axis are shown the possible time displacements $(t_2, t_3)$ that satisfy $\gcd(t_2, t_3) = 1$ and $2t_2 \leq t_3$, which we can assume. For each $(t_2, t_3)$, we show a column of data points in which the pitch displacement $p_2$ and the bass annotation $\tB$ vary.}
  \label{fig:flex}
\end{figure}

\begin{table} \label{tab:rep_flex}
  \begin{tabular}{rl|l}
    Composer & Work & Flexibility \\ \hline
    many & all 3-part accompanied stacked canons & 3.935 \\
    Forestier & \emph{L'homme arm\'e,} Agnus Dei I & 3.562 \\
    Forestier & \emph{L'homme arm\'e,} Agnus Dei II & 3.542 \\
    Palestrina & ``Virgo prudentissima'' (I pars) & 3.363 \\
    Palestrina & \emph{Primi toni,} Agnus Dei II & 3.344 \\
    Palestrina & \emph{Ad fugam,} Agnus Dei II & 3.292 \\
    Palestrina & \emph{Sine nomine,} Agnus Dei II & 3.177 \\
    many & all 3-part unaccompanied stacked canons & 3.140 \\
    Palestrina & ``Accepit Jesus calicem'' & 3.118 \\
    Palestrina & $\vee$-canons inverting at the 12th & 3 \\
    Palestrina & \emph{Ad fugam}, Pleni & 2.992 \\
    O. & New canon (see Section \ref{sec:new_sc}) & 2.814 \\
    anon. & ``Non nobis Domine'' & 2.781 \\
    Palestrina & \emph{Repleatur,} Agnus Dei II & 2.754 \\
    La Rue & \emph{O Salutaris Hostia,} Kyrie II & 2.683 \\
    La Rue & \emph{O Salutaris Hostia,} Christe & 2.678 \\
    La Rue & \emph{O Salutaris Hostia,} Kyrie I & 2.420 \\
    Forestier & \emph{L'homme arm\'e,} Agnus Dei III & 2.420 \\
    Josquin & \emph{Hercules dux Ferrariae,} Agnus Dei II & 2 \\
    O. & New canon (see Section \ref{sec:new_fib}) & 1.618
  \end{tabular}
  \caption{The flexibility values in canons by Palestrina and other composers}
\end{table}

Several trends can be seen in Figure \ref{fig:flex}. Both the highest and the lowest flexibility values occur near the left side of the graph, where $t_3$ is small, that is, the time displacements are in a simple ratio. The leftmost column, marked $(1,2)$, comprises the symmetric canons. In the top left data points (entries \ref{ti:stacked}, \ref{ti:inv}, and \ref{ti:stacked-u} of Table \ref{tab:flex}) we recover that stacked canons, as well as inverting $\vee$-canons at the octave, 10th, and 12th with the bass entering second, are very flexible, having flexibility values of $3$ or greater. By contrast, the canonic scheme $\{(0,0)\tB, (1,11), (2,7)\}$, appearing in the bottom left corner of Figure \ref{fig:flex}, has a flexibility value of exactly $1$ (the minimum possible nonzero flexibility value). It is an inverting $\wedge$-canon at the 9th. For any length $n \geq 2$, it admits only \emph{two} valid canons up to transposition, both of minimal musical interest, as shown in Figure \ref{fig:impos}. Unsurprisingly, there is \emph{no} canon in the Renaissance repertoire utilizing this scheme or an equivalent one.
\begin{figure}
  \centering
  \hfill
  \includegraphics[width=0.4\linewidth]{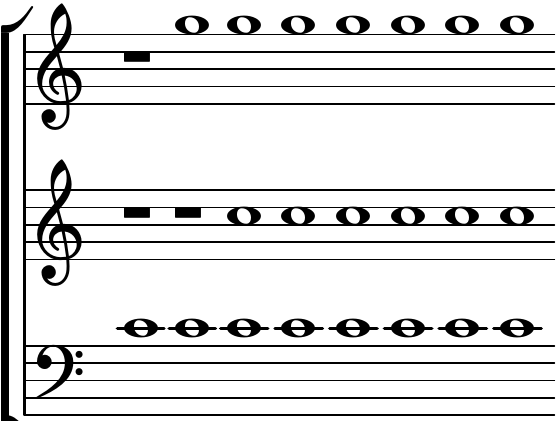}
  \hfill
  \includegraphics[width=0.4\linewidth]{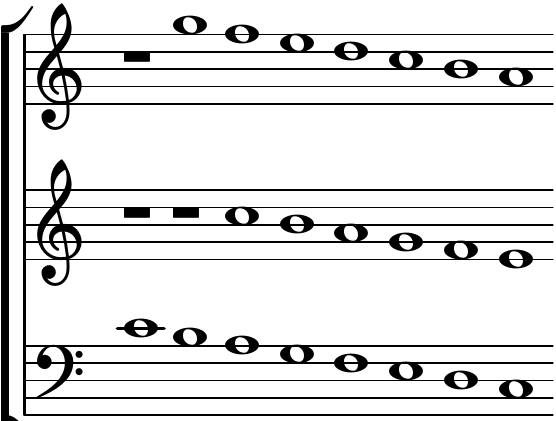}
  \hfill{}
  \caption{The only two consonant canons with the scheme $\S = \{(0,0)\tB, (1,4), (2,7)\}$, which has flexibility $1$.}
  \label{fig:impos}
\end{figure}

We also obtain a discrepancy between the unaccompanied inverting canons of $\vee$ and $\wedge$ type employing inversion at the same interval. For example, for inversion at the 12th, the $\vee$ type, used by Palestrina (see Figure \ref{fig:GiaFuBenedictus}), has flexibility $3$ while the $\wedge$ type has a flexibility value of only $2$. The latter is the scheme of the Agnus Dei II\footnote{Unlike Palestrina, the composers of the Josquin generation often wrote three separate Agnus Dei movements in his masses, the middle Agnus Dei typically using reduced forces to prepare for a grand Agnus Dei III.} of Josquin's mass \emph{Hercules dux Ferrariae} \citep*{Josquin}, which includes quite a few concessions in the dissonance treatment, such as upward-resolving suspensions, which look backward to Ockeghem rather than forward to Palestrina. This is the lowest flexibility value we have found in the repertoire analyzed.  The \emph{secunda pars} of Palestrina's motet ``Virgo prudentissima'' uses a similar scheme, a $\wedge$-canon inverting at the 12th, as listed in Table \ref{tab:Pal_canons}; but it is an accompanied canon, and in this case the $\vee$ and $\wedge$ types have equal flexibility values of $3$.

In short, the composers of Josquin's generation make use of a wide range of flexibility options from $2$ up to $3.935$ in canons for three-voice canons; Palestrina excludes the lower portion of this, his lowest $\lambda$-value of $2.754$ being found in the $6$-part Agnus Dei of the mass \emph{Repleatur os meum,} where the three canonic parts have lengthy rests.

Incidentally, the appearance of exact integers among the flexibility values correlates nicely with the existence of simple rules for composing canons. For instance, the value $\lambda = 3$ for $\vee$-canons at the 12th appears because one can write the octave, third, or fifth above the bass, following the long-known rule for invertible counterpoint at the 12th. As the displacements $t_i$ increase, the $\lambda$-values become algebraic integers of higher and higher degrees, and it is unreasonable to expect that a simple rule will enumerate all the valid canons.

One tendency of the repertoire \emph{not} captured by the model, but familiar to theorists in the Renaissance and after, is the preference for canons at perfect intervals; note that in Table \ref{tab:Pal_canons}, most pieces use pitch displacements by $p \equiv 0, 3, 4$ mod $7$, that is, imitation at the fourth, fifth, and octave, and that $p \equiv 3$ and $p \equiv 4$ do not normally occur in the same piece. In our model, owing to the shearing symmetry, any interval is as good as any other. The preference for fourth and fifth imitations cannot, therefore, be motivated by flexibility of the canon as we have defined it. It must be traced rather by a desire to imitate melodic motives more exactly, in that a melody based on a Guidonian hexachord (such as C-D-E-F-G-A) can be imitated at the fifth or fourth on another hexachord (such as G-A-B-C-D-E) without altering the placement of whole and half steps, and avoiding the melodic tritone to boot.

Another habit of Palestrina's, revealed by careful inspection of Table \ref{tab:Pal_canons}, is to bring in two voices in tight stretto, typically one semibreve apart at the interval of a fourth, while bringing in the third voice at a longer delay either before or after. This is equivalent to setting $t_2 = 1$ (or $t_2 = t_3 - 1$, which is equivalent under retrograde). In view of Figure \ref{fig:flex}, this preference has no perceptible effect on the computed flexibility value. However, it may help in producing canons with more stepwise movement and fuller harmonies, inasmuch as steps in the melody produce rich thirds and fifths against the tightly imitating line.

As the time displacements $t_i$ increase while remaining relatively prime, the flexibility values appear to even out and tend toward a limit, which is apparently around $3.3$ when there is no canonic bass voice and around $2.8$ when there is. Palestrina mainly draws from the former region, but the Pleni from his mass \emph{Ad fugam,} whose opening was shown earlier in Figure \ref{fig:ad_fugam_pleni}, successfully uses the second most flexible scheme ($\lambda(\S) = 2.992$) among all unaccompanied asymmetric three-voice canons within the limits of our calculations! It is absurd to suggest that Palestrina chose this scheme by an exhaustive mathematical analysis of the possibilities as we have done here. Most likely, his compositional skill and vast experience with imitative polyphony suggested a theme and a workable canonic scheme for it. By contrast, the anonymous riddle canon ``Non nobis Domine'' uses the inversion of this scheme, which has the significantly lower flex value of $2.781$. So it is perhaps inevitable that this piece is short and essentially constructed from a single motive, as Gauldin points out \citep*{GauldinComposition}.

The only more flexible scheme for an unaccompanied, asymmetric three-voice canon is given by $\{(0,0)$, $(1,0)\tB$, $(3,4)\}$ (or any of its numerous equivalent forms). It can be obtained from a 4-voice inverting canon at the 12th with the scheme $\{(0,0), (1,0)\tB, (2,4), (3,4)\}$ (a scheme frequently used by Josquin) by omitting the third voice. Is there a canon somewhere in the Renaissance repertoire using this scheme?

\section{Computing the flexibility value}
\label{sec:computing}

In the following theorem, we show that the limit defining the flexibility is well-defined, and we provide an effective method to compute it. Moreover, although it has lesser musical significance, one cannot help noticing that some of the flexibility values in Table \ref{tab:flex} have number-theoretic importance: these include the exact integers $1$, $2$, and $3$, and some quadratic surds, such as the golden ratio $\frac{1 + \sqrt{5}}{2} = 1.618\ldots$ (as noted in Section \ref{sec:fib}) and $1 + \sqrt{3} = 2.732\ldots.$ These are all algebraic integers (roots of integer polynomials with leading coefficient $1$).

\begin{thm} \label{thm:alg_int}
  For any canonic scheme $\S$ with at least two voices, the flexibility $\lambda(\S)$ is an algebraic integer, that is, a root of an integer polynomial with leading coefficient $1$, and the degree of this polynomial is at most $ \frac{1}{7} V_s(\S) \leq 7^{s - 1}$, where
  \[
  s = \max_i t_i - \min_i t_i
  \]
  is the total time displacement of the canon.
\end{thm}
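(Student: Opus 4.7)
The plan is to express $V_n(\S)$ as a matrix power and identify $\lambda(\S)$ with the spectral radius of a finite nonnegative integer transition matrix. After time translation (Proposition \ref{prop:eqv}\ref{it:ttrans}) I may assume $\min_i t_i = 0$, so $\max_i t_i = s$. The crucial observation is that whether $x_{n+1}$ may be legally appended to $[x_1,\ldots,x_n]$ depends only on the values $x_{n+1+t_i-t_j}$ for voice pairs with $t_i<t_j$, and these indices all lie in the window $\{n-s+1,\ldots,n\}$; hence the ``state'' determining the legal continuations is precisely the last $s$ notes.

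I would then let $\G_\S$ be the directed graph on vertex set $\V_s(\S)$ with an edge $(v_1,\ldots,v_s)\to(v_2,\ldots,v_{s+1})$ exactly when $(v_1,\ldots,v_{s+1})\in\V_{s+1}(\S)$. By the window observation, sliding $s$-windows identify $\V_n(\S)$ with the set of length-$(n-s)$ walks in $\G_\S$ for $n\geq s$, so if $M$ is the $\{0,1\}$-valued adjacency matrix of $\G_\S$, then $V_n(\S)=\mathbf{1}^{\top}M^{n-s}\mathbf{1}$. Pitch transposition (Proposition \ref{prop:eqv}\ref{it:ptrans}) gives a free $\FF_7$-action on $\V_s(\S)$: adding $b\neq 0$ to every coordinate of a tuple changes every coordinate, so each orbit has size exactly $7$ and $\V_s(\S)$ partitions into $V_s(\S)/7$ orbits. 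Since validity is equivariant, this action descends to a nonnegative integer transition matrix $\bar M$ on the orbit space, and an orbit count shows $V_n(\S)=7\cdot\mathbf{1}^{\top}\bar M^{n-s}\mathbf{1}$ for $n\geq s$. The size of $\bar M$ is $V_s(\S)/7\leq 7^{s-1}$.

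Truncating a valid $(n+1)$-canon always leaves a valid $n$-canon, so $V_n=0$ forces $V_{n+1}=0$; consequently either $V_n$ vanishes for all large $n$ (making $\lambda(\S)=0$ trivially an algebraic integer) or $V_n>0$ for every $n$. In the nontrivial case, Gelfand's formula gives $\|\bar M^m\|^{1/m}\to\rho(\bar M)$ for any matrix norm, and from the nonnegativity of $\bar M$ the two-sided estimate
\[
\max_{i,j}(\bar M^m)_{ij}\;\leq\;\mathbf{1}^{\top}\bar M^m\mathbf{1}\;\leq\;\bigl(V_s(\S)/7\bigr)^{2}\max_{i,j}(\bar M^m)_{ij}
\]
lets me conclude that $(\mathbf{1}^{\top}\bar M^{n-s}\mathbf{1})^{1/n}\to\rho(\bar M)$, whence $V_n(\S)^{1/n}\to\rho(\bar M)$. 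Thus the defining limit exists and $\lambda(\S)=\rho(\bar M)$, which is a root of the monic integer polynomial $\det(tI-\bar M)$ of degree $V_s(\S)/7\leq 7^{s-1}$; this proves both algebraicity and the degree bound.

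The main points to verify carefully are the freeness of the $\FF_7$-action (which is what gives the clean factor of $\tfrac{1}{7}$ in the degree bound and in the formula $V_n=7\,\mathbf{1}^{\top}\bar M^{n-s}\mathbf{1}$) and the existence of the limit defining $\lambda(\S)$, which rests on the truncation-monotonicity of ``$V_n>0$'' combined with Gelfand's formula; the remaining ingredients are routine bookkeeping about walks in a directed graph.
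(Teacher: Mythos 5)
Your combinatorial setup is the same as the paper's: the paper's graph $\G_\S$ has as nodes the length-$s$ valid canons normalized to begin on $0$, which (because the transposition action is free) is exactly your $\FF_7$-quotient with $\frac{1}{7}V_s(\S)$ nodes, and it derives the same identity $\frac{1}{7}V_{n+s}(\S)=\1^\top A_\S^n\1$. Where you genuinely diverge is the analytic core. The paper decomposes $\G_\S$ into strongly connected components, applies the Perron--Frobenius theorem to each, uses the block-triangular structure to exhibit a dominant nonnegative eigenvalue $\lambda$ of the integer matrix $A_\S$, and then proves $\lambda(\S)=\lambda$ (Lemma \ref{lem:eig}) via two-sided growth bounds: a lower bound $m\lambda^n\leq V_{n+s}$ from a Perron eigenvector, and an upper bound $V_n\leq Mn^d\lambda^n$ from the Cayley--Hamilton linear recurrence and the exponential-polynomial solution formula. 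You instead get existence of the limit and the identification $\lambda(\S)=\rho(\bar M)$ in one stroke from Gelfand's formula together with the equivalence of the entry-sum and max-entry norms; this is shorter and more elementary, dispensing with the recurrence machinery, while the paper's route yields a bit more as a by-product (an explicit exponential formula for $V_n$ and the remark that $\lambda$ dominates its Galois conjugates). One point you should make explicit: your final sentence treats $\rho(\bar M)$ as a root of $\det(tI-\bar M)$, but for a general matrix the spectral radius is only the maximum \emph{modulus} of the eigenvalues and need not itself be an eigenvalue (think of spectrum $\{-2\}$); you need the standard fact from Perron--Frobenius theory that for a nonnegative matrix the spectral radius is an eigenvalue (with nonnegative eigenvector), or the component decomposition as in the paper, which is precisely what that step of the paper's argument supplies. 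With that one citation added, your proof is complete and gives the same degree bound $\frac{1}{7}V_s(\S)\leq 7^{s-1}$.
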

%\begin{rem}
%  By identifying nodes of $\G_\S$ (see the proof) related by transposition, it is possible to reduce the bound from $7^s$ to $7^{s-1}$. We omit the details.
%\end{rem}
\begin{proof}
  The technique of the proof uses the Perron--Frobenius theorem. A suitable form of the theorem is as follows:
  \begin{defn}[see \cite{MeyerMatrix}, \textsection8.3]
    A square matrix $A$ is called \emph{reducible} if, after permuting the rows and columns the same way, it becomes a block-triangular matrix
    \begin{equation} \label{eq:XYZ}
      \begin{bmatrix}
        X & Y \\
        0 & Z
      \end{bmatrix}
    \end{equation}
    with $X$ and $Z$ square, and \emph{irreducible} otherwise. For an $n\times n$ matrix $A$, define its \emph{graph} $\G_A$ to be the (directed, possibly non-simple) graph on $n$ nodes with an edge from node $i$ to node $j$ for each nonzero entry $a_{ij}$. Observe that $A$ is irreducible exactly when $\G_A$ is \emph{strongly connected,} that is, for any nodes $i$ and $j$, there exists a directed path from $i$ to $j$.
  \end{defn}
  \begin{prop}[Perron--Frobenius theorem; see \cite{MeyerMatrix}, \textsection8.3]
    Let $A$ be an irreducible square matrix with nonnegative entries. Then:
    \begin{enumerate}[$($a$)$]
      \item The largest positive eigenvalue $\lambda$ of $A$ is also the spectral radius of $A$, that is, has the largest absolute value of any eigenvalue of $A$.
      \item $\lambda$ has algebraic multiplicity $1$.
      \item The corresponding eigenvector $\mathbf{v}$ is nonnegative, and is the unique nonnegative eigenvector (up to scaling) of $A$.
      %      \item Let $h$ be the \emph{imprimitivity index} of the graph $\G_A$, that is, the GCD of its cycle lengths. Then the eigenvalues of $A$ having absolute value $\lambda$ are precisely
      %      \[
      %      \lambda e^{2\pi k \sqrt{-1}/h}, \quad k = 0, 1, \ldots, h - 1.
      %      \]
      %      These eigenvalues are equally spaced around the unit circle, and the whole spectrum of $A$ is rotationally symmetric under multiplication by any $h$-th root of unity.
    \end{enumerate}
  \end{prop}
  
  Given a canonic scheme $\S$, we construct a graph $\G_\S$ and a matrix $A_\S$ as follows. Let $\G_\S$ have as nodes the $N = \frac{1}{7}V_s(\S)$ valid canons for $\S$ of length $s$ starting with the note $x_1 = 0$. For each of the $\frac{1}{7} V_{s+1}(\S)$ valid canons $(0 = x_1, x_2, \ldots, x_{s+1})$, draw a directed edge from
  \[
  (x_1, x_2, \ldots, x_s) \quad \text{to} \quad (x_2 - x_2, x_3 - x_2, \ldots, x_{s+1} - x_2).
  \]
  Thus $\G_\S$ is a directed graph on $N$ nodes, possibly with loops. Let $A_\S$ be the adjacency matrix of $\G_\S$; that is, an $N \cross N$ matrix whose rows and columns number the nodes and whose entries $a_{ij}$ count the number of edges pointing from node $i$ to node $j$. It is evident that $A_\S$ has nonnegative integer entries and encodes the same information as the graph $\G_\S$.
  
  Note also that $V_{n + s}(\S)$ is the number of paths with $n$ nodes in $\G_\S$: we can encode a canon $(x_1, \ldots x_{n + s})$ by following the edges corresponding to its substrings of length $s + 1$. Here it is essential that the validity of a canon can be checked by looking only at substrings of length at most $s + 1$, which is true by our choice of $s$. But, as is easy to see, counting paths amounts to taking powers of the adjacency matrix, so
  \begin{equation}
    \frac{1}{7} V_{n + s}(\S) = \1_N^{\top} A_\S^n \1_N, \label{eq:An}
  \end{equation}
  where $\1_N$ is the all-ones vector of length $N$. Thus we see the relevance of the eigenvalues of $A_\S$, which is a nonnegative integer matrix.
  
  However, $A_\S$ need not be irreducible. So we break up $\G_\S$ into its \emph{strongly connected components} $\G_i$ with sizes $N_i$. The adjacency matrix $A_i$ of $\G_i$ satisfies the hypotheses of the Perron--Frobenius theorem and so has a largest eigenvalue $\lambda_i$. Using the fact that the characteristic polynomial of a block-triangular matrix
  \[
  \begin{bmatrix}
    X & Y \\
    0 & Z
  \end{bmatrix}
  \]
  is the product of the characteristic polynomials of $X$ and $Z$, we see that the eigenvalues of $A_\S$ comprise the union (as a multiset) of the eigenvalues of the $A_i$. In particular, $A_\S$ has a dominant eigenvalue $\lambda$ which is real, nonnegative, and at least as great in absolute value as all other eigenvalues. Moreover, since $A_\S$ has integer entries (indeed, entries $0$ or $1$ unless $s = 1$), its characteristic polynomial
  \[
  f(x) = x^N + c_{N-1} x^{N-1} + \cdots + c_1 x + c_0
  \]
   has integer coefficients and leading coefficient $c_N = 1$. Hence $\lambda$ is an algebraic integer of degree at most $N \leq 7^{s - 1}$.
  
  To finish, we prove that $\lambda$ is in fact the flexibility:
  \begin{restatable}{lem}{lemeig} \label{lem:eig}
    The flexibility $\lambda(\S)$ of a scheme $\S$ is the largest eigenvalue $\lambda$ of its associated adjacency matrix $A_\S$.
  \end{restatable}
  The proof of this lemma is rather technical and will be found in Section \ref{sec:lemeig}.
\end{proof}

\begin{rem}
  Moreover, the Galois conjugates of $\lambda$ are among the roots of $f$ (possibly not all of them, as $f$ may be reducible), so, since $\lambda$ is the dominant root, it is at least as large in magnitude as each of its Galois conjugates. This yields a restriction on what algebraic numbers can show up at the flexibility of a canonic scheme.
\end{rem} 

This interpretation of the flexibility value also yields an efficient way to compute it. We first enumerate all valid canons of length at most $s + 1$ to find the graph $\G_\S$, which we then decompose into strongly connected components $\G_i$. For each $i$, we apply $A_i$ repeatedly to an arbitrary nonnegative vector (we can choose $\1_{N_i}$) until the result stabilizes to the desired precision. This has the advantage that $\lambda(\S)$ can be computed precisely and quickly once $A_\S$ is found, and the complexity of this grows exponentially with $s$. We have been able to compute examples for $s$ up to $8$, which conveniently is enough to include all the $3$-voice schemes in Table \ref{tab:Pal_canons}. % TODO Decide whether to include the code.

% \begin{qn}
% Is the flexibility always a Pisot number, that is, can we always take $h = 1$ in Theorem \ref{thm:alg_int}?  
% \end{qn}
% 
% To be sure, there are schemes for which the imprimitivity index $h$ of one of the graphs $\G_i$, which is the GCD of the lengths of all directed cycles, is not $1$. For instance, a strongly connected component can be an $h$-cycle, which has imprimitivity index $h$. But this has dominant eigenvalue $1$, which is still a Pisot number.

\section{Canons with hitherto unexplored schemes} \label{sec:new}

To explore the viability of composition using the vast variety of canonic schemes predicted to be feasible by our model, we present two new compositions on canonic schemes which, apparently, have not been used by any composer before.

\subsection{The Fibonacci scheme}\label{sec:new_fib}

The first piece (Figure \ref{fig:fib_comp}) uses the scheme $\S_\Fib$ studied in depth in Section \ref{sec:fib}. It has a flexibility value of $\lambda = 1.618\ldots,$ lower than what we have computed in any published piece. The theoretical model provides a useful guide to composition: when the melody proceeds in whole notes and other unsyncopated note values, the downbeats follow the motifs illustrated in Figure \ref{fig:fib_exs} which are derived from the graph in Figure \ref{fig:fib_graph}. Using rests or suspensions, it is possible to cross over from one side of Figure \ref{fig:fib_graph} to the other. The listener will note heavy repetition of certain motives, such as the one in Figure \ref{fig:versatile}; our model explains this as an inevitable consequence of the limited flexibility of the scheme.

\begin{figure}[htbp]
  \includegraphics[width=\linewidth]{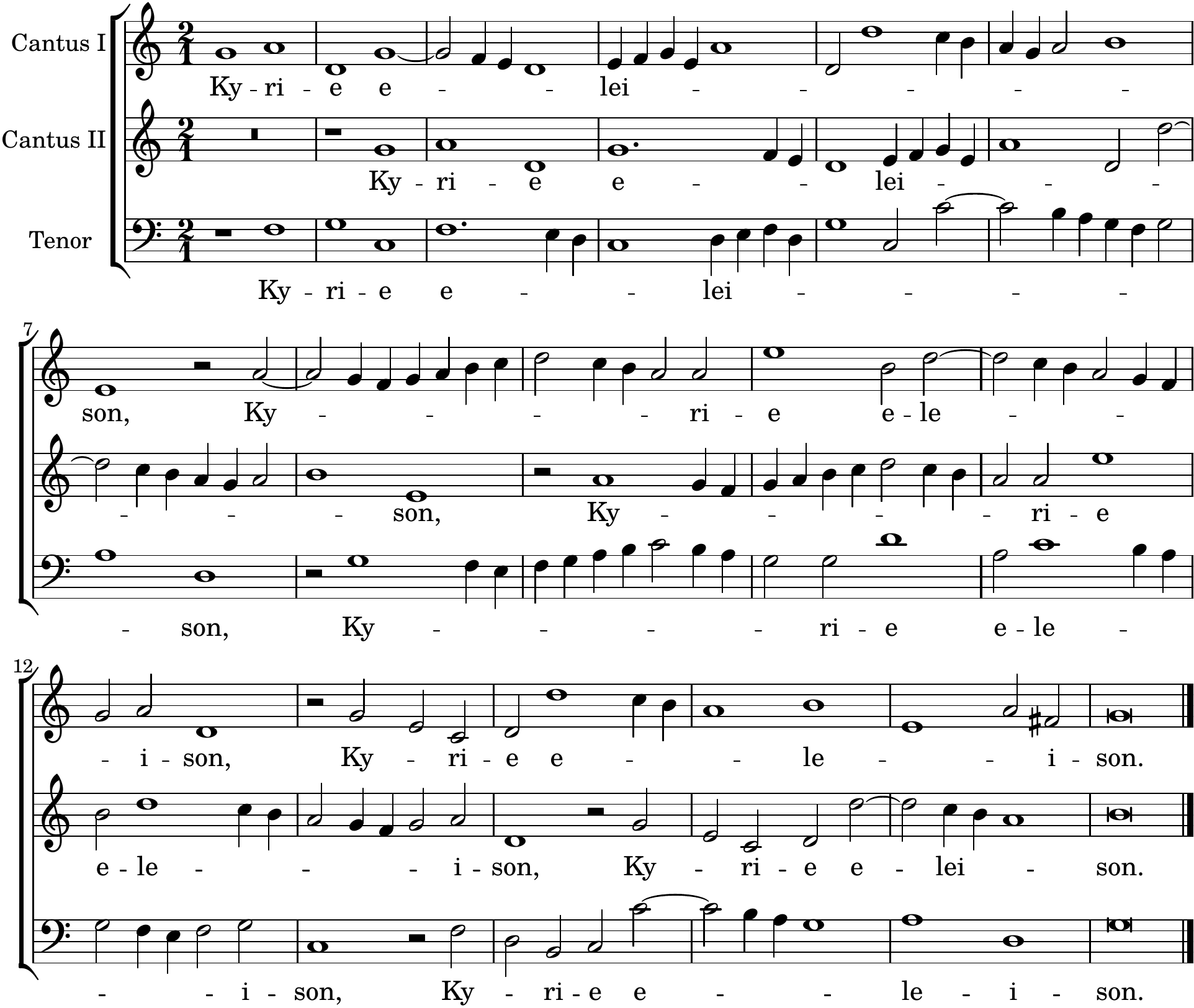}
  \caption{A new canon on the scheme $\S_\Fib$ of Section \ref{sec:fib}, with the limited flexibility value $\phi = 1.618\dots$}
  \label{fig:fib_comp}
\end{figure}

\subsection{A scheme from Palestrina's ``Sicut cervus''}\label{sec:new_sc}

Our second new composition (Figures \ref{fig:sc-begin}-\ref{fig:sc-end}) is a canon with the scheme
\[
\S_{\mathrm{SC}} = \{(0,0)\tB, (4,4), (7,7)\},
\]
It has a flexibility value of 2.814\dots (marked \ref{ti:new} in Table \ref{tab:flex}), which would fit in the mid-low range of Palestrina's output. It was not picked at random but rather is the imitative scheme for the first three voices of Palestrina's motet ``Sicut cervus,'' one of the most beloved Renaissance pieces today. This opens the way for another challenge frequently embraced by composers in the sixteenth century, namely incorporating preexisting material into a new piece. In the new piece, we weave in points of imitation on two themes derived from ``Sicut cervus,'' which is not a canon but a freely imitative motet (see Figures \ref{fig:SC1}--\ref{fig:SC2}). The new composition could thus serve as a movement for a parody mass (a frequent genre in the 16th century) based on this motet. In listening to the piece, observe that the scheme supports a wide variety of melodic and rhythmic effects within the style and that each of the preexisting themes admits multiple possible continuations.

\begin{figure}
  \centering
  \hfill
  \includegraphics[width=\linewidth]{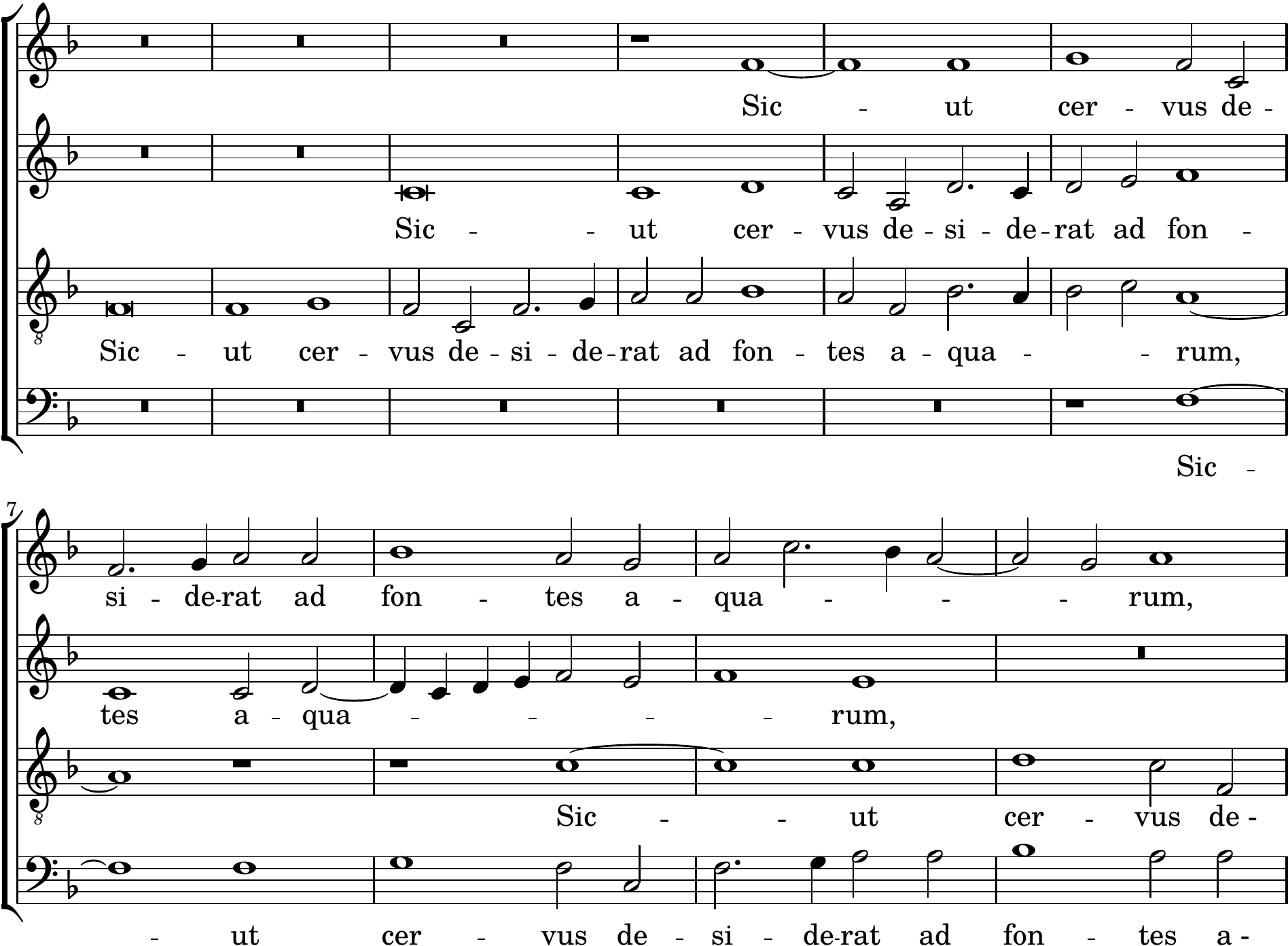}
  \caption{The opening of Palestrina's motet ``Sicut cervus.''}
  \label{fig:SC1}
\end{figure}

\begin{figure}
  \centering
  \hfill
  \includegraphics[width=\linewidth]{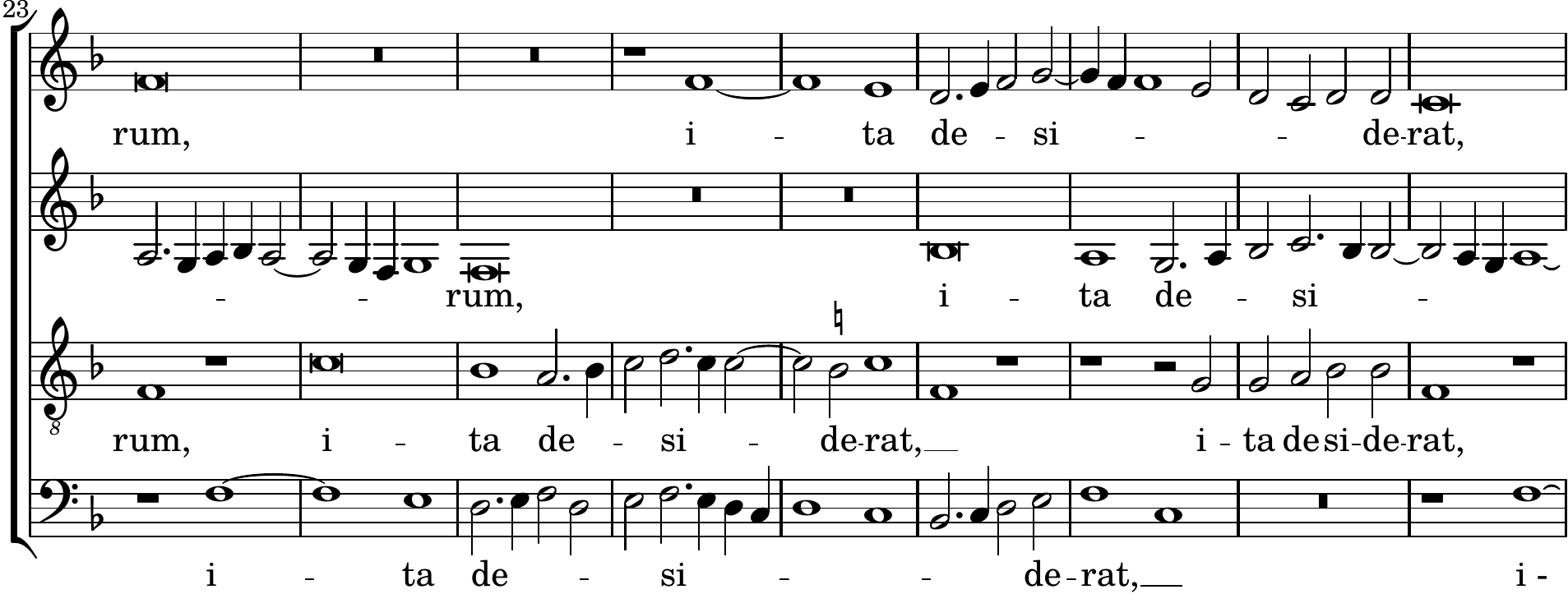}
  \caption{Another imitative passage in Palestrina's motet ``Sicut Cervus.''}
  \label{fig:SC2}
\end{figure}

As an aid in construction, we asked a computer to generate random canons on this scheme beginning with various note sequences, in other words, to take a random walk on the graph $\G_\S$. Figure \ref{fig:SC_gen} shows one of the many results that come from running this program starting from the seed values $(x_i) = (1,1,1,2,1,1,3,4)$ (taken from the first eight downbeats of the tenor voice of Figure \ref{fig:SC1}). Some motivic repetition is seen to emerge from the constraints of the scheme. (We have also asked the computer to avoid parallel octaves and fifths.)
% Note: typo fixed after submitting version 2 back to journal

% The new composition shows the feasibility of the novel scheme by adhering strictly to the laws of dissonance of the Palestrina style and also, as far as possible, pursuing its aesthetics of voice independence and a flowing melodic line, which are much harder to formulate as rules.

\begin{figure}
  \centering
  \hfill
  \includegraphics[width=\linewidth]{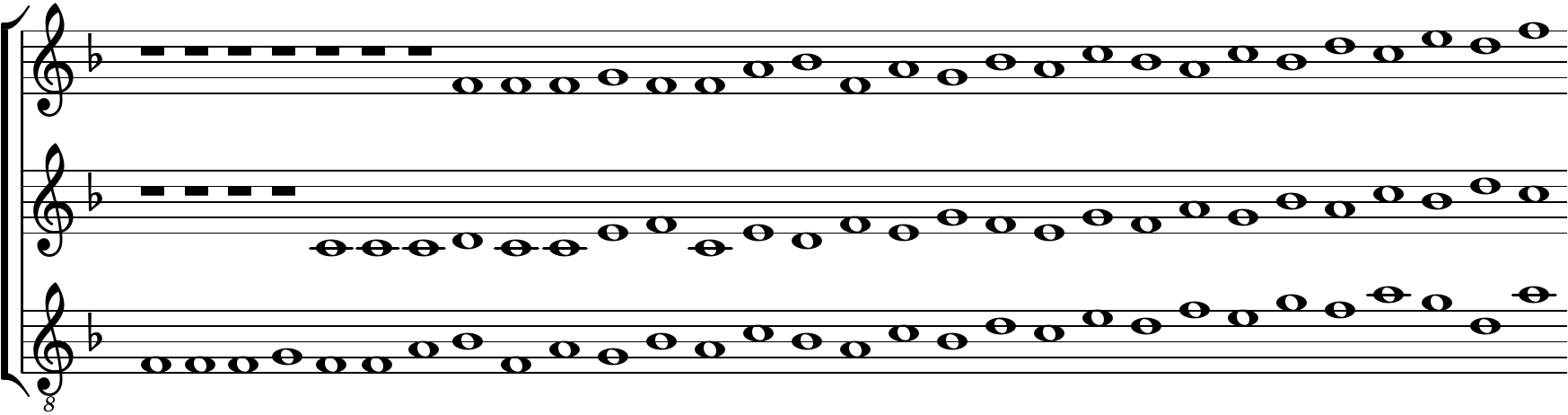}
  \caption{A randomly generated canon on the scheme $\S_{\mathrm{SC}}$ (see text).}
  \label{fig:SC_gen}
\end{figure}

\begin{figure}[p]
  \includegraphics[width=\linewidth]{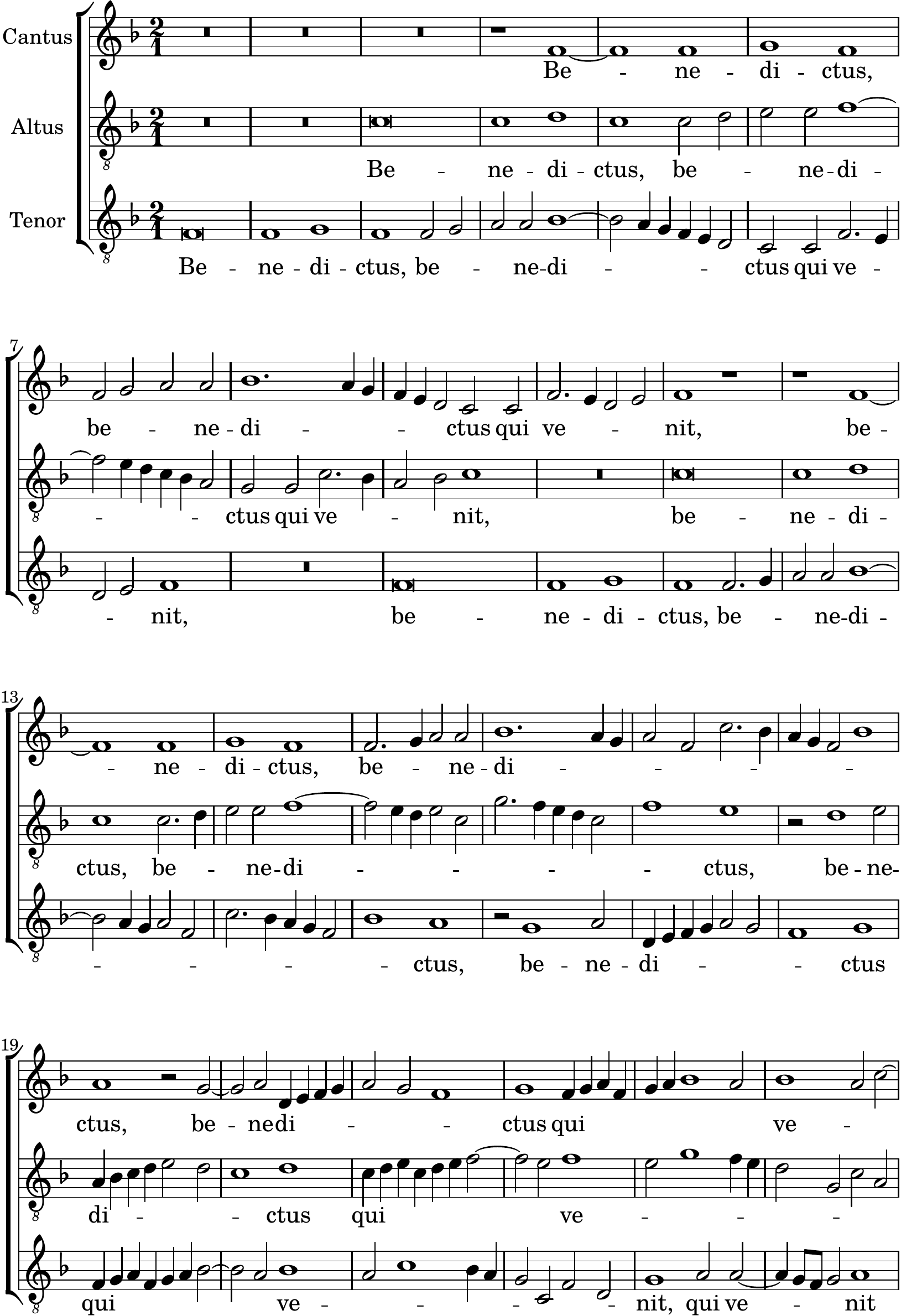}
  \caption{An original canon in Renaissance style, based on Palestrina's motet ``Sicut cervus,'' with a novel asymmetric scheme.}
  \label{fig:sc-begin}
\end{figure}
\begin{figure}[thbp]
  \includegraphics[width=\linewidth]{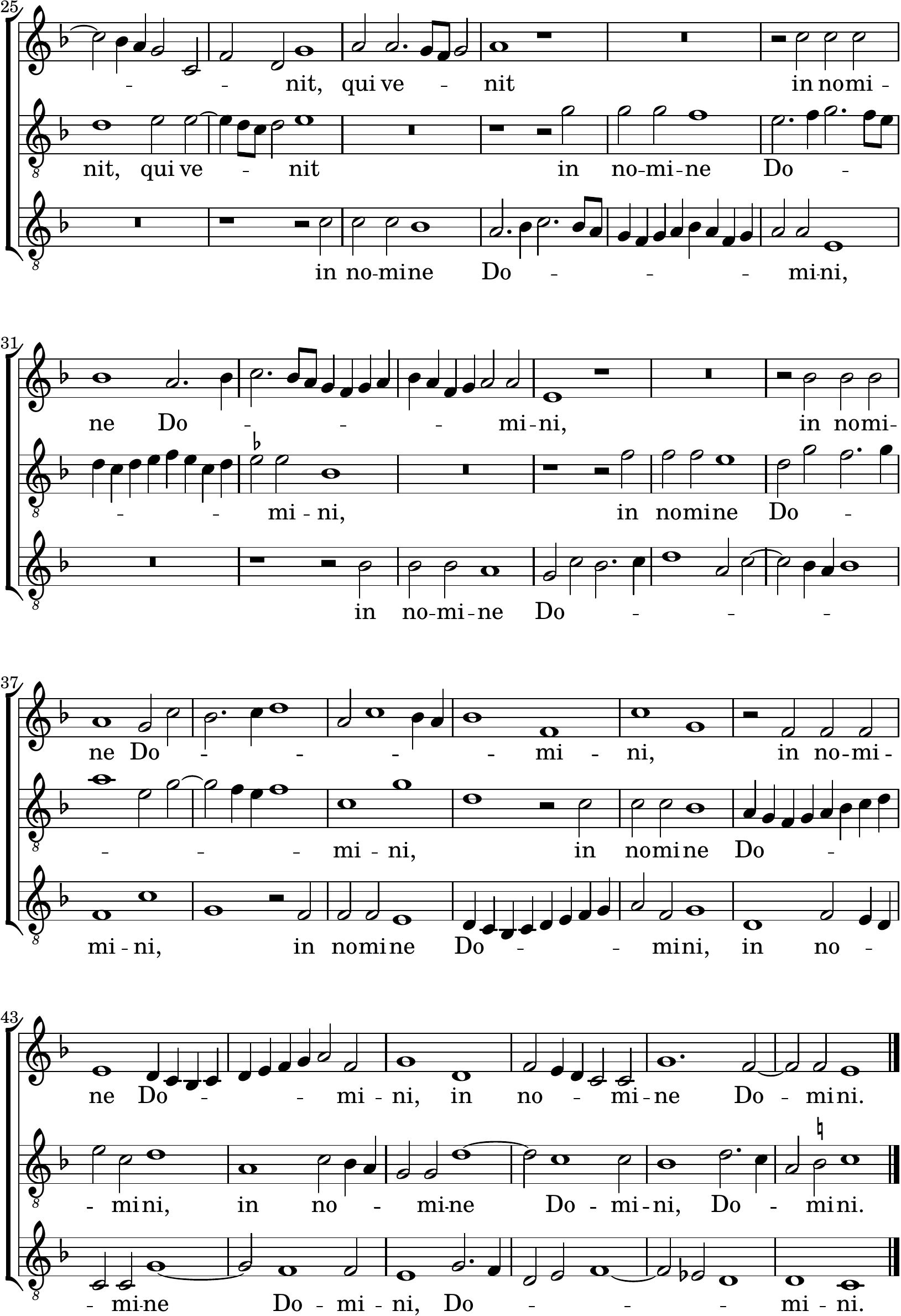}
  \caption{An original canon in Renaissance style, based on Palestrina's motet ``Sicut cervus,'' with a novel asymmetric scheme (continued).}
  \label{fig:sc-end}
\end{figure}

\section{Conclusion}

Our formalized notion of \emph{flexibility} provides a useful way to compare canonic schemes in Renaissance style. We recover that catches and stacked canons justly have pride of place; they are not just conceptually simpler, but they actually offer the most possibilities of any three-voice scheme. We also uncover latent unevenness among the various types of inverting canons. In asymmetric canons, we find a large number of essentially different types that, apparently, are mostly workable.

Our work leaves open the question of to what extent the composers in the sixteenth century ``knew'' the discrepancies between possible schemes. With inverting canons, we can answer affirmatively. Invertible counterpoint is a stock topic in contrapuntal theory as far back as \cite[ch.~56]{Zarlino}, who gives samples of invertible counterpoint at the 12th and 10th. It would have been plain to Palestrina that, in Figure \ref{fig:GiaFuBenedictus}, it was necessary to compose the bottom two voices in invertible counterpoint at the 12th in order to work with the top voice. % The superiority of ``$\vee$-type'' over ``$\wedge$-type'' inverting canons at the 12th (and 10th) was evidently discovered by trial and error, as in compositions of the $\wedge$ type there can emerge 4ths between the two lower voices despite the upper voice being correctly handled, a problem that does not crop up in the $\vee$ type.

With asymmetric canons, by contrast, no compositional recipe has come down to us; and in view of the complicated algebraic integers that appear in the analysis even for our vastly simplified theoretical model, it seems unlikely that the theoreticians of the Renaissance could have devised one. It seems that the composers who dared to write asymmetric canons (which are far, far less numerous than stacked canons, especially if we include two-voice canons and catches at the unison among the latter) used their trained skill at imitative polyphony to craft workable schemes and themes, unaided by any general theoretical guide. They successfully avoided a few schemes which we have demonstrated to be unworkable; on the other hand, there remain a large number of unattempted schemes expected to work just as well as those in the repertoire.

It would be instructive to extend this research to more than three voices. Pierre de la Rue's mass \emph{O salutaris hostia} \citep*{LaRue} (not to be confused with his motet by the same name) consists mainly of 4-in-1 canons where the time and pitch relation between soprano and alto is replicated between tenor and bass. This slightly mitigates the complexity of what is still a formidable challenge. Computation shows that the schemes used for the three movements of the Kyrie have flexibility values in the $2.4\sim 2.7$ range. An even more ambitious endeavor in this direction is Mathurin Forestier's \emph{Missa L'homme arm\'e} \citep*{Forestier}, sometimes attributed to Mouton (see  \cite*{BurnFurther}). It makes use of numerous two- and three-voice canons on the well-known theme ``L'homme arm\'e'' with systematically varying pitch intervals and ends with a striking canon for seven voices with the scheme
\[
  \{ (0,0)\tB, (1,3), (2,6), (3,0), (4,3), (5,6), (6,0) \}
\]
(time units are full measures of $3/1$). The scheme has a flexibility value of $2.420$, remarkably high for so many voices. To cite an even more extreme example, Gascongne's \emph{Ista est speciosa} is a stacked canon at the second for as many as twelve voices. All of these works from the Josquin generation contain musical effects that were already disfavored and would be unambiguously considered blemishes within a few decades: the La Rue has unsyncopated accented dissonances; the Forestier has a few parallel octaves and frequent rests (even in the final cadence); and the Gascongne, depending on the chromatic inflections chosen in the realization, is replete with either melodic tritones or chromatic modulations ending in a remote key \citep{BurnFurther}. These concessions show that composers were aware of the clashing demands of the canon and the style, and ultimately they decided in favor of what we find in Palestrina: canons of mostly simple types, executed in an exquisitely refined style. The findings of this paper call into question the idea that the dazzling works of the Josquin generation attain the limits of possibility within the style. By carefully choosing a scheme and diligently working out a melody, perhaps assisted by the many advances in computing power and AI since \cite{Lewin}, it may be possible to compose canons that combine the impressive devices of the Josquin generation with the sensitive contrapuntal control of Palestrina and his contemporaries.

There is no need to restrict one's sights to the Renaissance. For instance, Bach and Mozart wrote stacked canons for four voices. In principle, the theoretical method used in this paper can be applied to any polyphonic style in which certain vertical sonorities are preferred. The challenge of constructing a canon, and the added difficulties posed by asymmetric schemes, offer puzzles that will be appreciated by composers of all eras, especially those of a mathematical bent.

\appendix

% TODO Remake MIDI output.
% Include two versions of each file. The MIDI version should have
% - dynamics
% - a final rest

\section{Technical proofs} \label{sec:technical}

\subsection{Proof of Proposition \ref{prop:eqv}} \label{sec:propeqv}
In this section, we prove the following:
\propeqv*
\begin{proof}
  Items \ref{it:ttrans}--\ref{it:inv} are proved by producing a bijection between the valid canons $(x_i)$ of the two schemes $\S$, $\S'$ in question, proving that $V_n(\S) = V_n(\S')$. For items \ref{it:ttrans} and \ref{it:ptrans}, the same melody $(x_i)$ serves for both. For item \ref{it:shear}, we use a shearing transformation $x_i' = x_i + ci$ to transform a valid canon $(x_i)$ for $\S$ to a valid canon $(x_i')$ for the new scheme $\S' = \{(t_i, p_i + ct_i)\}$. The vertical intervals $y_{ti} - y_{tj}$ are thereby preserved. For item \ref{it:inv}, we invert the melody by setting $x_i' = -x_i$. Note that if the $i$th voice is the marked bass voice, the transformation $p_i' = 5 - p_i$ implies that intervals above the bass get inverted at the 10th and so remain consonant.
  
  Item \ref{it:tdil} is interesting because it cannot be proved by simply finding a bijection; the numbers of canons $V_n(\S)$, $V_n(\S')$ may differ in general. For instance, if $\S'$ is derived by dilating the times $t_i$ by a large integer $c$, then $V_n(\S') = 7^n$ for all $n \leq c$ because there is no opportunity for two notes to sound simultaneously under the scheme $\S'$.
  
  Assume first that $c > 0$ is an integer. Note that a valid canon for $\S'$ is composed of $c$ interwoven valid canons for $\S$: a melody $x_1, x_2, \ldots, x_n$ is valid for $\S'$ if and only if the subsequences
  \begin{equation} \label{eq:subseq}
    x_a, x_{c + a}, x_{2c + a}, \ldots, \quad 1 \leq a \leq c
  \end{equation}
  are valid canons for $\S$. Now we use the division algorithm (dividing $n$ by $c$ with remainder) to write $n = qc + \ell$, where $0 \leq \ell < c$. We note that the sequence \eqref{eq:subseq} stops at $x_{qc + a}$ if $a \leq \ell$, at $x_{(q-1)c + a}$ otherwise. So
  \begin{equation} \label{eq:tdil}
    V_n(\S') = V_{qc + \ell}(\S') = V_q(\S)^{c - \ell} V_{q+1}(\S)^\ell.
  \end{equation}
  If $V_n(\S) = 0$ for all sufficiently large $n$, then the same is true for $\S'$, so both schemes have flexibility $0$. So we will assume that $\lambda(\S) \geq 1$. By definition, for every $\epsilon$ ($0 < \epsilon < 1$) there is an $N$ such that for all $n \geq N$,
  \[
  \big(\lambda(\S) - \epsilon\big)^n \leq V_n(\S) \leq \big(\lambda(\S) + \epsilon\big)^n.
  \]
  If $q > N$, then by \eqref{eq:tdil}, we get the upper bound
  \begin{align*}
    V_n(\S') &= V_q(\S)^{c - \ell} V_{q+1}(\S)^\ell \\
    &\leq \big(\lambda(\S) + \epsilon\big)^{q(c - \ell)} \cdot \big(\lambda(\S) + \epsilon\big)^{(q+1)\ell} \\
    &= \big(\lambda(\S) + \epsilon\big)^{qc + \ell} \\
    &= \big(\lambda(\S) + \epsilon\big)^n
  \end{align*}
  and, analogously, the lower bound
  \[
  V_n(\S') \geq \big(\lambda(\S) - \epsilon\big)^n.
  \]
  Thus, for $n \geq cN$, we have the desired bounds
  \[
  \lambda(\S) - \epsilon \leq \sqrt[n]{V_n(\S')} \leq \lambda(\S) + \epsilon.
  \]
  Since $\epsilon$ was arbitrary, we have shown that $\lambda(\S') = \lambda(\S)$.
  
  This concludes the proof in the case that $c$ is a positive integer. The other cases follow readily: if $c = a/b$ is a ratio of positive integers, we first multiply the time displacements $t_i$ by $a$ and then divide by $b$, using the previous case to show that both steps preserve the flexibility value. Lastly, if $c = -1$, we put the melody in retrograde (that is, $x_i' = x_{n+1-i}$), taking care of all the negative values of $c$.
\end{proof}

\subsection{Proof of Lemma \ref{lem:eig}} \label{sec:lemeig}

In this section we prove the following:
\lemeig*

\begin{proof}
 It suffices to demonstrate bounds of the form
\begin{equation}
  m \cdot \lambda^n \leq V_n(\S) \leq M \cdot n^d \cdot \lambda^n
\end{equation}
for some constants $M > m > 0$ and $d \geq 0$, as then the desired root $\sqrt[n]{V_n(\S)}$ will be squeezed between two quantities that tend to $\lambda$ as $n \to \infty$.

For the lower bound, let $\mathbf{v}_i$ be the nonnegative eigenvector of $A_i$, normalized so that its entries sum to $1$. Then, using formula \ref{eq:An} for $V_{n}(\S)$ in terms of the matrix $A_i$,
\begin{align*}
  V_{n + s}(\S) &= \1_N^{\top} A_\S^n \1_N \\
  & \geq \1_{N_i}^\top A_i^n \1_{N_1} \\
  & \geq \mathbf{v}_i^\top A_i^n \mathbf{v}_i \\
  & = \mathbf{v}_i^\top \lambda^n \mathbf{v}_i \\
  &= \lambda^n,
\end{align*}
establishing the lower bound with $m = \lambda^{-s}$.

As to the upper bound, we first claim that the terms $V_n(\S)$ satisfy the homogeneous linear recurrence
\[
\sum_{i = 0}^N c_i V_{n + i}(\S) = 0, \quad n \geq s + 1
\]
with characteristic polynomial $f$. This holds because
\begin{align*}
  \sum_{i = 0}^N c_i V_{n + i}(\S) &= \sum_{i = 0}^N c_i \mathbf{v}_i^\top A_\S^{n-s+i} \mathbf{v}_i \\
  &= \mathbf{v}_i^\top A_\S^{n-s}\left(\sum_{i=0}^n c_i A^i \right) \mathbf{v}_i \\
  &= \mathbf{v}_i^\top A_\S^{n-s} f(A) \mathbf{v}_i,
\end{align*}
and $f(A) = 0$ by the Cayley-Hamilton theorem. Now, applying the standard theorem on such linear recurrences, we deduce that, if $f$ has roots $\lambda = \mu_1, \mu_2, \ldots, \mu_\ell$ of multiplicities $d_1,\ldots, d_\ell$, then there exist polynomials $g_1, \ldots, g_\ell$ with degrees $\deg(g_i) \leq d_i - 1$ such that the formula
\[
V_n(\S) = \sum_{i = 1}^\ell g_i(n) \mu_i^n
\]
holds for all sufficiently large $n$ (specifically, for $n \geq s + 1 + d_0$ where $d_0$ is the multiplicity of $0$ as a root of $f$). But all $\lvert\mu_i\rvert < \lambda$, so letting $d = \max\{d_i\} - 1$, we get a bound of the form $V_n \leq M \cdot n^d \cdot \lambda^n$, as desired.
\end{proof}

\section{Table of flexibility values for three-voice canons}
\label{app:tab}

In Table \ref{tab:flex}, we list the flexibility values of all three-voice canons with time displacements $0 = t_1 < t_2 < t_3 \leq 8$, with the reductions of the number of cases explained in Section \ref{sec:data}.
\begin{table}
  \begin{tabular}{@{ }c@{ }c@{ }c|l@{}l@{}l@{}l@{}l@{}l@{}l@{\!\!}}
    $t_2$ & $t_3$ & bass & $p_3 = 0$ & $1$ & $2$ & $3$ & $4$ & $5$ & $6$ \\ \hline
    1 & 2 & none & 3.935\ftn{ti:stacked} & 2     & 3.562 & 3\ftn{ti:inv}    & 3     & 3.562\ftn{ti:For1} & 2     \\
    &   &  1st & 3.140\ftn{ti:stacked-u} & 2     & 2.732 & 2.732 & 2     & 3.140 & 1\ftn{ti:impos}     \\
    &   &  2nd & 3\ftn{ti:inv8}     & 1     & 3.562\ftn{ti:inv10} & 1     & 3\uu{\ref*{ti:inv}}     & 2     & 2     \\
    &   &  3rd & 3.140 & 2     & 2.732 & 2.732 & 2\ftn{ti:Hercules}     & 3.140 & 1     \\
    1 & 3 & none & 3.542\ftn{ti:For2} & 2.754 & 3.438 & 3.169 & 3.169 & 3.438 & 2.754\ftn{ti:reple} \\
    &   &  1st & 2.896 & 2.622 & 2.622 & 2.896 & 2.206 & \textbf{2.992\ftn{ti:fugam-pleni}} & 2.206 \\
    &   &  2nd & 2.754 & 2.247 & 3.033 & \textbf{1.618\ftn{ti:fib}} & 3.033 & 2.247 & 2.754 \\
    &   &  3rd & 2.945 & 2.481 & 2.781\ftn{ti:non-nobis} & 2.781 & 2.481 & 2.945 & 2.081 \\
    1 & 4 & none & 3.436 & 3.003 & 3.383 & 3.235 & 3.235 & 3.383 & 3.003 \\
    &   &  1st & 2.798 & 2.798 & 2.592 & 2.904 & 2.300 & 2.904 & 2.592 \\
    &   &  2nd & 2.666 & 2.666 & 2.875 & 2.346 & 2.952 & 2.346 & 2.875 \\
    &   &  3rd & 2.886 & 2.639 & 2.801 & 2.801 & 2.639 & 2.886 & 2.413 \\
    1 & 5 & none & 3.390 & 3.118 & 3.358 & 3.266 & 3.266 & 3.358 & 3.118\ftn{ti:accepit} \\
    &   &  1st & 2.747 & 2.849 & 2.583 & 2.883 & 2.583 & 2.849 & 2.747 \\
    &   &  2nd & 2.634 & 2.801 & 2.801 & 2.634 & 2.888 & 2.399 & 2.888 \\
    &   &  3rd & 2.861 & 2.709 & 2.809 & 2.809 & 2.709 & 2.861 & 2.565 \\
    2 & 5 & none & 3.398 & 3.363\ftn{ti:virgo} & 3.260 & 3.098 & 3.098 & 3.260 & 3.363 \\
    &   &  1st & 2.806 & 2.676 & 2.489 & 2.676 & 2.806 & 2.873 & 2.873 \\
    &   &  2nd & 2.595 & 2.795 & 2.902 & 2.902 & 2.795 & 2.595 & 2.325 \\
    &   &  3rd & 2.845 & 2.755 & 2.611 & 2.611 & 2.755 & 2.845 & 2.876 \\
    1 & 6 & none & 3.367 & 3.181 & 3.345 & 3.282 & 3.282 & 3.345 & 3.181 \\
    &   &  1st & 2.718 & 2.858 & 2.581 & 2.858 & 2.718 & 2.811 & 2.811 \\
    &   &  2nd & 2.620 & 2.844 & 2.758 & 2.758 & 2.844 & 2.620 & 2.873 \\
    &   &  3rd & 2.848 & 2.745 & 2.813 & 2.813 & 2.745 & 2.848 & 2.646 \\
    1 & 7 & none & 3.353 & 3.218 & 3.337 & 3.292 & 3.292\ftn{ti:fugam-agnus} & 3.337 & 3.218 \\
    &   &  1st & 2.698 & 2.853 & 2.698 & 2.836 & 2.784 & 2.784 & 2.836 \\
    &   &  2nd & 2.614 & 2.853 & 2.731 & 2.812 & 2.812 & 2.731 & 2.853 \\
    &   &  3rd & 2.841 & 2.767 & 2.816 & 2.816 & 2.767 & 2.841 & 2.695 \\
    2 & 7 & none & 3.356 & 3.340 & 3.290 & 3.208 & 3.208 & 3.290 & 3.340\ftn{ti:sine} \\
    &   &  1st & 2.745 & 2.643 & 2.745 & 2.814 & 2.849 & 2.849 & 2.814 \\
    &   &  2nd & 2.578 & 2.715 & 2.810 & 2.859 & 2.859 & 2.810 & 2.715 \\
    &   &  3rd & 2.833 & 2.791 & 2.722 & 2.722 & 2.791 & 2.833 & 2.847 \\
    3 & 7 & none & 3.359 & 3.288 & 3.203 & 3.341 & 3.341 & 3.203 & 3.288 \\
    &   &  1st & 2.785 & 2.852 & 2.785 & 2.702 & 2.835 & 2.835 & 2.702 \\
    &   &  2nd & 2.555 & 2.808 & 2.863 & 2.703 & 2.703 & 2.863 & 2.808 \\
    &   &  3rd & \textbf{2.814\ftn{ti:new}} & 2.846 & 2.753 & 2.753 & 2.846 & 2.814 & 2.662 \\
    1 & 8 & none & 3.344\ftn{ti:primi} & 3.242 & 3.333 & 3.298 & 3.298 & 3.333 & 3.242 \\
    &   &  1st & 2.685 & 2.842 & 2.763 & 2.816 & 2.816 & 2.763 & 2.842 \\
    &   &  2nd & 2.713 & 2.849 & 2.713 & 2.834 & 2.788 & 2.788 & 2.834 \\
    &   &  3rd & 2.836 & 2.780 & 2.818 & 2.818 & 2.780 & 2.836 & 2.726 \\
    3 & 8 & none & 3.348 & 3.295 & 3.230 & 3.335 & 3.335 & 3.230 & 3.295 \\
    &   &  1st & 2.764 & 2.842 & 2.816 & 2.687 & 2.816 & 2.842 & 2.764 \\
    &   &  2nd & 2.681 & 2.837 & 2.837 & 2.681 & 2.777 & 2.858 & 2.777 \\
    &   &  3rd & 2.816 & 2.840 & 2.771 & 2.771 & 2.840 & 2.816 & 2.703
  \end{tabular}~%
  \begin{tabular}{@{}r|p{1.1in}}
    \ref*{ti:stacked} & Acc.\ stacked canons and catches \\
    \ref*{ti:inv} & $\vee$-canons at the 12th \\
    \ref*{ti:For1} & Forestier's mass \emph{L'homme arm\'e,} Agnus Dei I \\
    \ref*{ti:stacked-u} & Unacc.\ stacked canons and catches; $\wedge$-canons at the octave \\
    \ref*{ti:impos} & Inflexible canon (see Figure \ref{fig:impos}) \\
    \ref*{ti:inv8} & Unacc.\ $\vee$-canons at the octave \\
    \ref*{ti:inv10} & Unacc.\ $\vee$-canons at the 10th \\
    \ref*{ti:Hercules} & Unacc.\ $\wedge$-canons at the 12th \\
    \ref*{ti:For2} & Forestier's mass \emph{L'homme arm\'e,} Agnus Dei II \\
    \ref*{ti:reple} & \emph{Repleatur,} Agnus Dei II \\
    \ref*{ti:fugam-pleni} & \textbf{\emph{Ad fugam,} Pleni} \\
    \ref*{ti:fib} & \textbf{Fibonacci scheme (see Sections \ref{sec:fib} and \ref{sec:new})} \\
    \ref*{ti:non-nobis} & ``Non nobis Domine'' \\
    \ref*{ti:accepit} & ``Accepit Jesus calicem'' \\
    \ref*{ti:virgo} & ``Virgo prudentissima'' (I pars) \\
    \ref*{ti:fugam-agnus} & \emph{Ad fugam,} Agnus Dei II \\
    \ref*{ti:sine} & \emph{Sine nomine,} Agnus Dei II \\
    \ref*{ti:new} & \textbf{New canon (see~Section~\ref{sec:new})} \\
    \ref*{ti:primi} & \emph{Primi toni,} Agnus Dei II
  \end{tabular}
  \label{tab:flex}
  \vspace{1pt}
  \caption{The flexibility values of the $3$-voice canonic schemes $\{(0,0), (t_2,0), (t_3,p_3)\}$ with $t_3$ up to $8$. The footnotes indicate schemes that occur in the repertoire.}
\end{table}
% End main matter

\section*{Acknowledgements}% The * makes this section unnumbered.
\addcontentsline{toc}{section}{Acknowledgements}% This command makes the unnumbered section also appear in the pdf bookmarks.
I thank Peter Smith for helpful guidance. I thank the three anonymous reviewers for patiently reading an earlier draft and catching many typos.

% If you have an Online Supplement, or other online material, include this section.
\section*{Supplemental online material}% The * makes this section unnumbered.
\addcontentsline{toc}{section}{Supplemental online material}% This command makes the unnumbered section also appear in the pdf bookmarks.

Supplemental online material for this article can be accessed at \url{doi-provided-by-publisher} and/or \url{https://github.com/emo916math/Renaissance-canons}. The Online Supplement includes the following:
\begin{itemize}
  \item MIDI realizations of the musical examples in this paper. To clarify the counterpoint and voice crossing, a different MIDI instrument has been used for each voice in the texture.
  \item A Sage program that was used to compute the flexibility values in this paper.
\end{itemize}
% Use this exact phrasing.
% The official Online Supplement file will be hosted at the article link provided by the publisher. You may also host other material at your-own-link.

% Please note: throughout the article ``Online Supplement'' should be capitalized.

\section*{Disclosure statement}% The * makes this section unnumbered.
\addcontentsline{toc}{section}{Disclosure statement}% This command makes the unnumbered section also appear in the pdf bookmarks.

No potential conflict of interest was reported by the authors.

\section*{ORCID}% The * makes this section unnumbered.
\addcontentsline{toc}{section}{ORCID}% Do not change this.

\bibliographystyle{tMAM}% The file tMAM.bst from author package must be in your working directory.
% It automatically formats the bibliography, don't change it at all. Do not include the extension .bst in this command, nor the extension .bib in the next command.
\bibliography{Music}

\begin{thebibliography}{20}
\newcommand{\enquote}[1]{``#1''}
\providecommand{\natexlab}[1]{#1}
\providecommand{\url}[1]{\normalfont{#1}}
\providecommand{\urlprefix}{ }
\expandafter\ifx\csname urlstyle\endcsname\relax
  \providecommand{\doi}[1]{doi:\discretionary{}{}{}#1}\else
  \providecommand{\doi}{doi:\discretionary{}{}{}\begingroup
  \urlstyle{rm}\Url}\fi

\bibitem[Arias-Valero, Agustín-Aquino, and Lluis-Puebla(2022)]{Arias}
Arias-Valero, Juan, Octavio Agustín-Aquino, and Emilio Lluis-Puebla. 2022.
\newblock ``Musicological, computational, and conceptual aspects of
  first-species counterpoint theory.'' \emph{Journal of Mathematics and Music}
  1--17.

\bibitem[Burn(2001)]{BurnFurther}
Burn, David. 2001.
\newblock ``Further Observations on Stacked Canon and {R}enaissance
  Compositional Procedure: {G}ascongne's `{I}sta Est Speciosa' and
  {F}orestier's `{M}issa L'Homme Armé'.'' \emph{Journal of Music Theory} 45
  (1): 73--118.
\newblock \urlprefix\url{http://www.jstor.org/stable/3090649}.

\bibitem[Farbood and Schoner(2001)]{Farbood}
Farbood, Morwaread, and Bernd Schoner. 2001.
\newblock ``Analysis and Synthesis of Palestrina-Style Counterpoint Using
  Markov Chains.'' \emph{International Computer Music Conference Proceedings}
  2001: 4 pp.

\bibitem[Feininger(1937)]{Feininger}
Feininger, Laurence K.~J. 1937.
\newblock ``Die {F}r\"uhgeschichte des {K}anons bis {J}osquin des {P}rez (um
  1500).'' Thesis ({P}h.{D}.), University of Heidelberg.

\bibitem[Forestier(1996)]{Forestier}
Forestier, Mathurin. 1996.
\newblock ``Missa L'Homme arm\'e.'' In \emph{Opera omnia,}   edited by T.~G.
  MacCracken and N.~S. Josephson, Corpus mensurabilis musicae, no.~104. S.l.:
  Neuhausen-Stuttgart: American Institute of Musicology; Hänssler-Verlag.

\bibitem[Gauldin(1996)]{GauldinComposition}
Gauldin, Robert. 1996.
\newblock ``The Composition of Late {R}enaissance Stretto Canons.''
  \emph{Theory and Practice} 21: 29--54.
\newblock \urlprefix\url{http://www.jstor.org/stable/41054290}.

\bibitem[Gauldin(2013)]{gauldin2013practical}
Gauldin, Robert. 2013.
\newblock \emph{A Practical Approach to 16th Century Counterpoint: Revised
  Edition}.
\newblock Waveland Press.
\newblock \urlprefix\url{https://books.google.com/books?id=afAVAAAAQBAJ}.

\bibitem[Gennaro~Auricchio and Lanzarotto(2023)]{tilings}
Gennaro~Auricchio, Luca~Ferrarini, and Greta Lanzarotto. 2023.
\newblock ``An integer linear programming model for tilings.'' \emph{Journal of
  Mathematics and Music} 17 (3): 514--530.
\newblock \urlprefix\url{https://doi.org/10.1080/17459737.2023.2180812}.

\bibitem[Gosman(1997)]{GosmanStacked}
Gosman, Alan. 1997.
\newblock ``Stacked Canon and {R}enaissance Compositional Procedure.''
  \emph{Journal of Music Theory} 41 (2): 289--317.
\newblock \urlprefix\url{http://www.jstor.org/stable/843961}.

\bibitem[Herremans and Sörensen(2013)]{Herremans}
Herremans, D., and K.~Sörensen. 2013.
\newblock ``Composing fifth species counterpoint music with a variable
  neighborhood search algorithm.'' \emph{Expert systems with applications} 40
  (16): 6427--6437.

\bibitem[Jeppesen(2012)]{jeppesen2012style}
Jeppesen, K. 2012.
\newblock \emph{The Style of Palestrina and the Dissonance}.
\newblock Dover Books on Music: Analysis. Dover Publications.
\newblock \urlprefix\url{https://books.google.com/books?id=GQ6TcQvYnZoC}.

\bibitem[Jeppesen, Haydon, and Mann(1992)]{jeppesen1992counterpoint}
Jeppesen, K., G.~Haydon, and A.~Mann. 1992.
\newblock \emph{Counterpoint: The Polyphonic Vocal Style of the Sixteenth
  Century}.
\newblock The Prentice-Hall music series. Dover Publications.
\newblock \urlprefix\url{https://books.google.com/books?id=OcSVGkug58gC}.

\bibitem[{Josquin des Prez}(1505/2002)]{Josquin}
{Josquin des Prez}. 1505/2002.
\newblock ``Missa Hercules dux Ferrariae.'' Amsterdam.
\newblock
  \urlprefix\url{https://imslp.org/wiki/Missa_Hercules_dux_Ferrariae%2C_NJE_11.1_(Josquin_Desprez)}.

\bibitem[La~Rue(1516/1996)]{LaRue}
La~Rue, Pierre~de. 1516/1996.
\newblock ``Missa O salutaris hostia.'' In \emph{Missa O gloriosa Margaretha;
  Missa O salutaris hostia; Missa Pascale; Missa Pro fidelibus defunctis; Missa
  Puer natus est nobis,}  Vol.~5  edited by Nigel St. John~Davison. American
  Institute of Musicology; Hänssler-Verlag.
\newblock
  \urlprefix\url{https://imslp.org/wiki/Missa_O_salutaris_hostia_(La_Rue%2C_Pierre_de)}.

\bibitem[Lewin(1983)]{Lewin}
Lewin, David~Benjamin. 1983.
\newblock ``An Interesting Global Rule for Species Counterpoint.'' \emph{In
  Theory Only} 6/8: 19--44.

\bibitem[Meyer(2000)]{MeyerMatrix}
Meyer, C.~D. 2000.
\newblock \emph{Matrix analysis and applied linear algebra}.
\newblock Philadelphia: Society for Industrial and Applied Mathematics.

\bibitem[Palestrina(1569--1601/1862--1907)]{Pal}
Palestrina, Giovanni Pierluigi~da. 1569--1601/1862--1907.
\newblock ``Opera omnia {I}oannis {P}etraloysii {P}raenestini.''
  \urlprefix\url{https://imslp.org/wiki/Opera_omnia_Ioannis_Petraloysii_Praenestini_(Palestrina,_Giovanni_Pierluigi_da)}.

\bibitem[Robinson and Hall(1989)]{Aldrich}
Robinson, B.W., and R.F. Hall, eds. 1989.
\newblock \emph{The Aldrich Book of Catches}.
\newblock Novello.
\newblock \urlprefix\url{https://books.google.com/books?id=dsE5AQAAIAAJ}.

\bibitem[van Geenen(2012)]{Geenen}
van Geenen, Jurjen~L. 2012.
\newblock ``On designing stacked canons with relative chord tones.''
  \emph{Journal of Mathematics and Music} 6 (3): 187--205.
\newblock \urlprefix\url{https://doi.org/10.1080/17459737.2012.744443}.

\bibitem[Zarlino(1558/1968)]{Zarlino}
Zarlino, Gioseffo. 1558/1968.
\newblock \emph{The art of counterpoint. {P}art three of {L}e istituzioni
  harmoniche}.
\newblock Music theory translation series [2]. New Haven: Yale University
  Press.

\end{thebibliography}

\addcontentsline{toc}{section}{References}

\end{document}